\newtheorem{thm}{Theorem}
\newtheorem{lem}{Lemma}%
\newtheorem{cor}{Corollary}%
\newcounter{fig}
\theoremstyle{definition}
\theoremstyle{remark}
\newtheorem{remark}{Remark}
\theoremstyle{plain}
\numberwithin{equation}{section}
\def\CC{{\mathbb C}}
\def\NN{{\mathbb N}}
\def\QQ{{\mathbb Q}}
\def\RR{{\mathbb R}}
\def\TT{{\mathbb T}}
\def\ZZ{{\mathbb Z}}
\def\scrB{{\mathcal B}}
\def\scrD{{\mathcal D}}
\def\Re{\operatorname{Re}}
\def\Im{\operatorname{Im}}
\def\e{\mathrm{e}}
\def\i{\mathrm{i}}
\def\C{\operatorname{C{}}}
\def\L{\operatorname{L{}}}
\def\sgn{\operatorname{sgn}}
\title{The value distribution of incomplete Gauss sums}
\author{Emek Demirci Akarsu}\address{School of Mathematics, University of Bristol,
Bristol BS8 1TW, U.K.\newline
\rule[0ex]{0ex}{0ex} \hspace{8pt}{\tt E.DemirciAkarsu@bristol.ac.uk}}
\author{Jens Marklof}
\address{School of Mathematics, University of Bristol,
Bristol BS8 1TW, U.K.\newline
\rule[0ex]{0ex}{0ex} \hspace{8pt}{\tt j.marklof@bristol.ac.uk}}
\date{6 July 2012}
\thanks{E.D.A.\ is supported by a Turkish Ministry of Education doctoral training grant. J.M.\ is supported by a Royal Society Wolfson Research Merit Award, a Leverhulme Trust Research Fellowship and ERC Advanced Grant HFAKT}
\subjclass[2010]{11L05}
\begin{document}

\begin{abstract}
It is well known that the classical Gauss sum, normalized by the square-root number of terms, takes only finitely many values. If one restricts the range of summation to a subinterval, a much richer structure emerges. We prove a limit law for the value distribution of such incomplete Gauss sums. The limit distribution is given by the distribution of a certain family of periodic functions. Our results complement Oskolkov's pointwise bounds for incomplete Gauss sums as well as the limit theorems for quadratic Weyl sums (theta sums) due to Jurkat and van Horne and the second author.
\end{abstract}

\maketitle
\section{Introduction \label{secIntro}}
The present paper investigates the asymptotic distribution of the incomplete Gauss sum
\begin{equation}
g_\varphi(p,q)=\sum_{h=0}^{q-1} \varphi\bigg(\frac{h}{q}\bigg) e_q(p h^2),
\end{equation}
where $q\in\NN$, $p\in\ZZ_q$, and $e_q(x)=\e^{2\pi\i x/q}$; the weight function $\varphi$ is periodic with period one. The case $\varphi=1$ corresponds to the classical Gauss sum. The main example of an incomplete Gauss sum in the literature is the case when $\varphi$ is the characteristic function of a subinterval of the unit interval \cite{Lehmer76,Fiedler77,Oskolkov91,Evans03,Paris05,Paris08}.

It is natural to assume that $p$ and $q$ are coprime, i.e, $p\in\ZZ_q^\times = \{p\leq q, \gcd(p,q)=1 \}$. Here $\ZZ_q^\times$ is the multiplicative group of integers mod $q$. The order of  $\ZZ_q^\times$ is denoted by $\phi(q)$ (Euler's totient function).  If $p,q$ are not coprime, say $\gcd(p,q)=r$ for some $r>1$, we set $p'=p/r$, $q'=q/r$, and observe that
\begin{equation}\label{reduct}
g_\varphi(p,q)= g_{\varphi_r}(p',q')
\end{equation}
where 
\begin{equation}
\varphi_r(x)=\sum_{k=0}^{r-1} \varphi\bigg( \frac{x+k}{r} \bigg). 
\end{equation}
The case when $p,q$ are not coprime can therefore be reduced to the coprime case.

Functional equations and pointwise estimates of incomplete Gauss sums have been studied extensively by Oskolkov \cite{Oskolkov91}, and the aim of the present paper is to complement his results by establishing limit theorems for their value distribution at random argument. 

The existence of a limit distribution of the classical theta sum
\begin{equation}
S_N(x) = \frac{1}{\sqrt{N}} \sum_{n=0}^{N-1} e(n^2 x), \qquad e(x)=\e^{2\pi\i x},
\end{equation}
for $x$ uniformly distributed in $\TT=\RR/\ZZ$ has been proved by Jurkat and van Horne \cite{Jurkat81,Jurkat82,Jurkat83} (for its absolute value) and the second author \cite{Marklof99} (for its full distribution in the complex plane); we refer the reader also to the recent study by Cellarosi \cite{Cellarosi11}. A striking feature of the limit distribution of theta sums is that it has a heavy tail: The probability that $|S_N(x)|$ has a value greater than $R$, decays, for large $R$, as $R^{-4}$. At rational $x$, the theta sum of course reduces to an incomplete Gauss sum where $\varphi$ is the characteristic function of an interval, and we will see below (Remark \ref{rem1}) that in this case the limit distribution has compact support---the exact opposite of a heavy tail. 

We denote by $\sum_{k\in\ZZ} \widehat\varphi_k e(k x)$ the Fourier series of $\varphi$. 
We will focus for the major part of this paper on Gauss sums with differentiable weight functions $\varphi$ in the space
\begin{equation}
\scrB(\TT) = \bigg\{ \varphi :  \sum_{k\in\ZZ} k^2 |\widehat\varphi_k| <\infty\bigg\} ,
\end{equation}
and only later extend our results to general Riemann integrable functions, under an additional assumption on $q$.

The limit distributions of incomplete Gauss sums will be characterized by the following random variables: 
\begin{itemize}
\item $X$ takes the four values $\pm 1\pm\i$ with equal probability. 
\item $Y$ takes the values $\pm 1$ with equal probability. 
\item $Z$ takes the values $1 \pm \i$ with equal probability.
\item $G_\varphi^+$, $G_\varphi$, $G_\varphi^-$ are random variables given by the Fourier series
\begin{equation}\label{FS}
 G_\varphi^+(x)=\sum_{n\in\ZZ} \widehat \varphi_{2n} \; e(n^2 x), \qquad
 G_\varphi(x)=\sum_{n\in\ZZ} \widehat \varphi_{n} \; e(n^2 x), \qquad
 G_\varphi^-(x)=\sum_{n\in2\ZZ+1} \widehat \varphi_{n} \; e(n^2 x) ,
\end{equation}
respectively, with $x$ uniformly distributed on $\TT$. 
\end{itemize}

\begin{remark}
Note that, for $\varphi\in\scrB(\TT)$, the functions in \eqref{FS} are differentiable and thus continuous. If $\varphi$ satisfies the functional relation $\varphi(x)=\varphi(\frac{1}{2}-x)$, then its Fourier coefficients are related via $\hat\varphi_{-n}=(-1)^n\hat\varphi_n$, and hence $G_\varphi^-=0$ and  $G_\varphi^+=G_\varphi$. If $\widehat \varphi_{n}+\widehat \varphi_{-n}$ is real-valued for all $n$, then $\Im G_\varphi^\pm(-x)=-\Im G_\varphi^\pm(x)$ and $\Im G_\varphi(-x)=-\Im G_\varphi(x)$. Hence the probability density describing the distribution of the imaginary part of the random variables $G_\varphi^+$, $G_\varphi$, $G_\varphi^-$ is symmetric. Furthermore, $\Im G_\varphi^-(x+\frac14)=\Re G_\varphi^-(x)$, and thus the real and imaginary part of $G_\varphi^-$ have the same distribution. 
\end{remark}

We define $\epsilon_a = 1$ if $a\equiv 1 \bmod 4$, and $\epsilon_a = \i$ if $a\equiv 3 \bmod 4$. The symbol $\xrightarrow{d}$ denotes convergence in distribution. 

\begin{thm}\label{thm1}
Fix a subset $\scrD\subset\TT$ with boundary of measure zero, and let $\varphi\in\scrB(\TT)$. For each $q\in\NN$, choose $p\in\ZZ_q^\times\cap q\scrD$ at random with uniform probability. Then, as $q\to\infty$ along an appropriate subsequence as specified below, we have:

\begin{center}
\renewcommand{\arraystretch}{2}
\begin{tabular}{|c|c|c|}
\hline
& $q$ is not a square & $q$ is a square \\
\hline
$q\equiv 0\bmod 4$ & $\displaystyle\bigg( \frac{g_1(p,q)}{\sqrt{q}}, \frac{g_\varphi(p,q)}{g_1(p,q)} \bigg) \xrightarrow{d} (X,G_\varphi^+)$  & $\displaystyle\bigg( \frac{g_1(p,q)}{\sqrt{q}}, \frac{g_\varphi(p,q)}{g_1(p,q)} \bigg) \xrightarrow{d} (Z,G_\varphi^+)$ \\[10pt]
\hline
$q\equiv 1\bmod 2$ & $\displaystyle\bigg( \frac{g_1(p,q)}{\epsilon_q\sqrt q}, \frac{g_\varphi(p,q)}{g_1(p,q)} \bigg) \xrightarrow{d}  (Y, G_\varphi)$ & $\displaystyle \frac{g_\varphi(p,q)}{\epsilon_q\sqrt q}  \xrightarrow{d}  G_\varphi $ \\[10pt]
\hline
\hline
& $q/2$ is not a square & $q/2$ is a square \\
\hline
$q\equiv 2\bmod 4$ & $\displaystyle\bigg( \frac{g_1(2p,q/2)}{\epsilon_{q/2}\sqrt{q/2}}, \frac{g_\varphi(p,q)}{2g_1(2p,q/2)} \bigg) \xrightarrow{d}  (Y, G_\varphi^-)$  & $\displaystyle\frac{g_\varphi(p,q)}{\epsilon_{q/2} \sqrt{2q}}  \xrightarrow{d}  G_\varphi^-$ \\[10pt]
\hline
\end{tabular}
\end{center}
\end{thm}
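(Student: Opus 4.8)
The plan is to combine an exact algebraic evaluation of $g_\varphi(p,q)$ with an equidistribution result for modular inverses. First I would expand $\varphi$ in its Fourier series and interchange summation (legitimate since $\varphi\in\scrB(\TT)$ forces $\sum_k|\widehat\varphi_k|<\infty$), reducing $g_\varphi(p,q)$ to a weighted sum of complete Gauss sums carrying a linear term,
\[
g_\varphi(p,q)=\sum_{k\in\ZZ}\widehat\varphi_k\,G(p,k;q),\qquad G(p,k;q)=\sum_{h=0}^{q-1}e_q(ph^2+kh).
\]
The key elementary observation is a translation/parity argument: replacing $h$ by $h+q/2$ when $q$ is even multiplies the $k$-th summand by a sign depending on the parity of $k$ and on $q\bmod4$. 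This forces $G(p,k;q)=0$ for odd $k$ when $q\equiv0\bmod4$, and for even $k$ when $q\equiv2\bmod4$, which is exactly the mechanism selecting the even Fourier coefficients $\widehat\varphi_{2n}$ (producing $G_\varphi^+$), respectively the odd coefficients (producing $G_\varphi^-$); for odd $q$ all $k$ survive and one gets $G_\varphi$.

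For the surviving frequencies the relevant leading coefficient --- $2p$ when $q$ is odd, $p$ when $4\mid q$, and the odd part after splitting off the $2$-adic factor by the Chinese remainder theorem when $q\equiv2\bmod4$ --- is invertible, so completing the square evaluates $G(p,k;q)$ as a value of $g_1(\cdot,\cdot)$ times a phase $e_q(-c\,\overline{p}\,k^2)$ with an explicit constant $c$. Summing over $k$ then yields the clean identity
\[
\frac{g_\varphi(p,q)}{g_1(\cdot,\cdot)}=G_\varphi^{(\pm)}\!\bigl(x_p\bigr),\qquad x_p=-c\,\overline{p}/q'\in\TT,
\]
where $q'$ is the relevant odd modulus and $\overline{p}$ its inverse; the point is that the second component is \emph{exactly} the periodic function from \eqref{FS} evaluated at the single point $x_p$. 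Simultaneously Gauss's classical evaluation of $g_1$ expresses the first component as an explicit Jacobi symbol times a root of unity ($\left(\tfrac{p}{q}\right)$ for odd $q$, and $\left(\tfrac{q}{p}\right)$ together with $\epsilon_p$ for $4\mid q$); these take finitely many values and, as $p$ varies, reproduce $X$, $Y$, or $Z$, the collapse to $Z$ (resp. the disappearance of the finite factor altogether) when $q$ is a square being precisely the degeneration of the Jacobi symbol to $1$ on squares.

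It then remains to prove the joint equidistribution, as $p$ ranges uniformly over $\ZZ_q^\times\cap q\scrD$, of the finite character data together with $x_p$, towards the product of the uniform measure on the finite value set and Lebesgue measure on $\TT$. By Weyl's criterion this reduces to showing that, for every nonzero frequency $m$ and every nontrivial character $\chi$ arising from the finite data, the twisted sum $\sum_{p}\chi(p)\,e(m\overline{p}/q')$ is $o(\phi(q))$; incorporating the region $\scrD$ (and any residue condition $p\bmod4$) by approximating its indicator with a smooth function introduces an extra additive phase $e(tp/q)$ and turns this into a (twisted) Kloosterman or Sali\'e sum, which the Weil bound controls by $O(q^{1/2+\varepsilon})=o(\phi(q))$. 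Granting this equidistribution, the theorem follows from the continuous mapping theorem, since $\varphi\in\scrB(\TT)$ makes each $G_\varphi^{(\pm)}$ continuous on $\TT$. I expect the genuine difficulty to lie in this last step: one must verify that the modular inverse is asymptotically independent of the \emph{multiplicative} Gauss-sum data (the Jacobi symbols and $\epsilon_p$), uniformly enough to survive the boundary approximation for $\scrD$, and the passage to a subsequence enters here --- to stabilize the non-degenerate limit of the finite component (which requires $q$, or $q/2$, to be a non-square) and to ensure the Weil-type bounds dominate $\phi(q)$ along the chosen $q$.
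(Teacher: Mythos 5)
Your proposal follows essentially the same route as the paper: a functional equation obtained by Fourier expansion and completing the square (the paper's Theorem 2), combined with joint equidistribution of the Jacobi-symbol/$\epsilon_p$ data and $\overline{p}/q$, proved via Weyl's criterion and the Weil bounds on Kloosterman, twisted Kloosterman and Sali\'e sums (the paper's Theorem 5), finishing with the continuous mapping theorem. The only step you leave implicit is the exact uniform distribution of the finite component itself (the $m=0$ character sums), for which the paper constructs, via quadratic reciprocity and the Chinese remainder theorem, an $r\equiv 1\bmod 4$ with $\left(\frac{q}{r}\right)=-1$ whenever $q$ is not a square; this is where the non-square hypothesis you flag actually gets used.
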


The proof of this theorem is given in Section \ref{proof}. The key ingredients are functional equations for incomplete Gauss sums (Section \ref{func}) and estimates on (twisted) Kloosterman sums and Sali\'e sums (Section \ref{equi}). It is crucial that the exponential sums considered here are quadratic in $h$. The case of higher powers is significantly more difficult, cf.~\cite{Montgomery95}.  

To illustrate the statement of Theorem \ref{thm1}, let us consider the distribution of the absolute values of incomplete Gauss sums on the positive axis $\RR_{\geq 0}$. 

\begin{cor}\label{cor1}
Under the assumptions of Theorem \ref{thm1}, as $q\to\infty$,

\begin{center}
\renewcommand{\arraystretch}{2}
\begin{tabular}{|c|c|}
\hline
$q\equiv 0\bmod 4$ & $\displaystyle \frac{|g_\varphi(p,q)|}{\sqrt{2q}}  \xrightarrow{d} |G_\varphi^+|$  \\[10pt]
\hline
$q\equiv 1\bmod 2$ & $\displaystyle \frac{|g_\varphi(p,q)|}{\sqrt{q}}  \xrightarrow{d}  |G_\varphi|$  \\[10pt]
\hline
$q\equiv 2\bmod 4$ & $\displaystyle \frac{|g_\varphi(p,q)|}{\sqrt{2q}} \xrightarrow{d}  |G_\varphi^-|$   \\[10pt]
\hline
\end{tabular}
\end{center}
\end{cor}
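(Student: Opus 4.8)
The plan is to deduce Corollary~\ref{cor1} directly from Theorem~\ref{thm1} by passing to absolute values, the central tool being the continuous mapping theorem. The feature that makes this work is that the first coordinate of each paired limit in Theorem~\ref{thm1} is a ``phase'' random variable of deterministic modulus: one has $|X|=|Z|=\sqrt2$ (since $|\pm1\pm\i|=|1\pm\i|=\sqrt2$) and $|Y|=1$, while the scalars $\epsilon_q,\epsilon_{q/2}$ satisfy $|\epsilon_a|=1$ by definition.

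First I would record, in each case, the trivial factorisation of $g_\varphi(p,q)$ into the two coordinates appearing in Theorem~\ref{thm1}; for example, for $q\equiv0\bmod4$,
\[
\frac{g_\varphi(p,q)}{\sqrt q}=\frac{g_1(p,q)}{\sqrt q}\cdot\frac{g_\varphi(p,q)}{g_1(p,q)},
\]
and similarly with the normalisations $\epsilon_q\sqrt q$ and $2\epsilon_{q/2}\sqrt{q/2}$ in the odd and $q\equiv2\bmod4$ cases. Since the map $(z,w)\mapsto|zw|=|z|\,|w|$ is continuous on $\CC^2$, the continuous mapping theorem applied to the joint convergence in Theorem~\ref{thm1} gives convergence in distribution of the normalised $|g_\varphi(p,q)|$ to $|X|\,|G_\varphi^+|$, $|Y|\,|G_\varphi|$, or $|Y|\,|G_\varphi^-|$. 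It is precisely here that the joint, rather than merely marginal, form of Theorem~\ref{thm1} is needed.

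Substituting the deterministic moduli collapses these limits. For $q\equiv0\bmod4$ (square or not, since $|X|=|Z|$) one obtains $|g_\varphi(p,q)|/\sqrt q\xrightarrow{d}\sqrt2\,|G_\varphi^+|$, i.e.\ $|g_\varphi(p,q)|/\sqrt{2q}\xrightarrow{d}|G_\varphi^+|$. In the odd and $q\equiv2\bmod4$ cases, the non-square subsequences give $|g_\varphi(p,q)|/\sqrt q\xrightarrow{d}|G_\varphi|$ and $|g_\varphi(p,q)|/(2\sqrt{q/2})\xrightarrow{d}|G_\varphi^-|$ respectively, and the identities $|\epsilon_a|=1$, $2\sqrt{q/2}=\sqrt{2q}$ and $\sqrt2\,\sqrt q=\sqrt{2q}$ put these in the stated form. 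Along the corresponding square subsequences Theorem~\ref{thm1} already supplies a single-coordinate law $g_\varphi(p,q)/(\epsilon_q\sqrt q)\xrightarrow{d}G_\varphi$ and $g_\varphi(p,q)/(\epsilon_{q/2}\sqrt{2q})\xrightarrow{d}G_\varphi^-$ with the correct normalisation, so applying the continuous map $z\mapsto|z|$ yields the same absolute-value limits.

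Since within each residue class the square and non-square subsequences partition the indices and both converge in distribution to the \emph{same} limit, the full sequence converges to that limit (two complementary subsequences converging to a common weak limit force convergence of the whole). This is what lets Corollary~\ref{cor1} drop the ``appropriate subsequence'' qualification of Theorem~\ref{thm1} and assert convergence simply as $q\to\infty$. There is no genuine obstacle in this argument; the only points needing care are the bookkeeping of moduli ($|X|=|Z|=\sqrt2$, $|Y|=|\epsilon_a|=1$) and the matching of normalising constants.
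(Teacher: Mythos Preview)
Your proposal is correct and is exactly the argument the paper has in mind; indeed the paper gives no separate proof of Corollary~\ref{cor1}, treating it as an immediate consequence of Theorem~\ref{thm1} via the continuous mapping theorem and the deterministic moduli $|X|=|Z|=\sqrt2$, $|Y|=|\epsilon_a|=1$. One very minor remark: since $|g_1(p,q)|/\sqrt q$ is in fact a deterministic constant in each residue class (by \eqref{GS}), the joint form of Theorem~\ref{thm1} is not strictly indispensable here---marginal convergence of the second coordinate would already suffice---but your route through the joint law is perfectly valid and arguably cleaner.
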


Since for smooth $\varphi$ the normalized incomplete Gauss sums $q^{-1/2} |g_\varphi(p,q)|$ are bounded (cf.~Theorem \ref{FEthm} below), the previous corollary implies convergence of the $k$th moment
\begin{equation}
M_{k,\varphi}(q)=\frac{1}{\phi(q) |\scrD|} \sum_{p\in\ZZ_q^\times\cap q\scrD} |g_\varphi(p,q)|^k .
\end{equation}

\begin{cor}\label{cor2}
Under the assumptions of Theorem \ref{thm1}, we have for any $k\geq 0$

\begin{center}
\renewcommand{\arraystretch}{2}
\begin{tabular}{|c|c|}
\hline
$q\equiv 0\bmod 4$ & $\displaystyle \lim_{q\to\infty} \frac{M_{k,\varphi}(q)}{(2q)^{k/2}}  = \int_\TT |G_\varphi^+(x)|^k dx$  \\[10pt]
\hline
$q\equiv 1\bmod 2$ & $\displaystyle \lim_{q\to\infty} \frac{M_{k,\varphi}(q)}{q^{k/2}}  = \int_\TT |G_\varphi(x)|^k dx$  \\[10pt]
\hline
$q\equiv 2\bmod 4$ & $\displaystyle \lim_{q\to\infty} \frac{M_{k,\varphi}(q)}{(2q)^{k/2}}  = \int_\TT |G_\varphi^-(x)|^k dx $  \\[10pt]
\hline
\end{tabular}
\end{center}
\end{cor}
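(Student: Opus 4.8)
The plan is to derive the moment asymptotics as a soft consequence of the distributional convergence already established in Corollary \ref{cor1}, upgraded from weak convergence to convergence of moments by means of the uniform $L^\infty$-bound furnished by Theorem \ref{FEthm}. Throughout, write $N_q=|\ZZ_q^\times\cap q\scrD|$ and let $p$ be uniformly distributed on $\ZZ_q^\times\cap q\scrD$, so that $\EE[\,\cdot\,]$ denotes the average $N_q^{-1}\sum_{p\in\ZZ_q^\times\cap q\scrD}$. For $q\equiv0\bmod4$ put $V_q=(2q)^{-1/2}|g_\varphi(p,q)|$; a direct rearrangement then gives $M_{k,\varphi}(q)/(2q)^{k/2}=\bigl(N_q/(\phi(q)\,|\scrD|)\bigr)\,\EE[V_q^k]$, and the analogous identity holds in the other two cases with the appropriate normalization. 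Since the reduced residues modulo $q$ equidistribute in $\TT$ and $\scrD$ has boundary of measure zero, $N_q/\phi(q)\to|\scrD|$, so the prefactor tends to $1$; it therefore suffices to prove $\EE[V_q^k]\to\int_\TT|G_\varphi^+(x)|^k\,dx$, and likewise for the remaining residue classes.

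First I would recall from Corollary \ref{cor1} that $V_q\xrightarrow{d}|G_\varphi^+|$ as $q\to\infty$ (and $|g_\varphi(p,q)|/\sqrt q\xrightarrow{d}|G_\varphi|$ for odd $q$, $|g_\varphi(p,q)|/\sqrt{2q}\xrightarrow{d}|G_\varphi^-|$ for $q\equiv2\bmod4$). Because $G_\varphi^+$ is by definition evaluated at $x$ uniform on $\TT$, the target $\int_\TT|G_\varphi^+(x)|^k\,dx$ is exactly $\EE[|G_\varphi^+|^k]$, so the claim reduces to the statement that the $k$th moments converge along with the laws. This is precisely where Theorem \ref{FEthm} enters: for $\varphi\in\scrB(\TT)$ it supplies a constant $C=C(\varphi)$ with $q^{-1/2}|g_\varphi(p,q)|\le C$ for all $p,q$, whence $V_q\le C$ uniformly in $q$ and, passing to the limit, $|G_\varphi^+|\le C$ as well.

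Given this uniform bound, the passage from weak convergence to moment convergence is routine. Fix any $k\ge0$ and choose a bounded continuous $f\colon\RR\to\RR$ agreeing with $x\mapsto x^k$ on $[0,C]$. Since $V_q$ and $|G_\varphi^+|$ take values in $[0,C]$ almost surely, we have $\EE[V_q^k]=\EE[f(V_q)]$ and $\EE[|G_\varphi^+|^k]=\EE[f(|G_\varphi^+|)]$, so the defining property of convergence in distribution gives $\EE[f(V_q)]\to\EE[f(|G_\varphi^+|)]$, i.e.\ $\EE[V_q^k]\to\EE[|G_\varphi^+|^k]$. Combining this with the counting asymptotics of the first paragraph yields $M_{k,\varphi}(q)/(2q)^{k/2}\to\int_\TT|G_\varphi^+(x)|^k\,dx$; the cases $q\equiv1\bmod2$ and $q\equiv2\bmod4$ are identical after replacing $G_\varphi^+$ by $G_\varphi$ and $G_\varphi^-$ respectively.

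The argument is essentially soft, so I do not expect a genuine obstacle; the single indispensable ingredient is the uniform boundedness in Theorem \ref{FEthm}, which is exactly what converts weak convergence into moment convergence (equivalently, it provides uniform integrability of the family $\{V_q^k\}$ and legitimizes passing the unbounded map $x\mapsto x^k$ through the limit). I would emphasize that this bound rests on the functional equations of Section \ref{func} and hence on the smoothness hypothesis $\varphi\in\scrB(\TT)$; this is precisely why the later extension to general Riemann integrable weights must impose an additional condition on $q$. The only other point requiring (routine) care is the equidistribution estimate $N_q/\phi(q)\to|\scrD|$, which follows from the exponential-sum bounds already developed in Section \ref{equi}.
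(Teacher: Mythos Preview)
Your proposal is correct and follows exactly the approach sketched in the paper: the one-line justification preceding Corollary \ref{cor2} observes that the uniform bound $q^{-1/2}|g_\varphi(p,q)|\le C$ from Theorem \ref{FEthm} upgrades the distributional convergence of Corollary \ref{cor1} to convergence of all moments. You are in fact more careful than the paper in explicitly handling the prefactor $N_q/(\phi(q)|\scrD|)\to 1$, which the paper leaves implicit.
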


The following technical estimate allows us to extend Theorem \ref{thm1} to non-smooth $\varphi$ as long as $q$ has a bounded number of divisors $d(q)$.

\begin{lem}\label{RIlem}
Fix a positive integer $N$. Then there exists a constant $C_N>0$ such that, for  every Riemann integrable function $\varphi:\TT\to\CC$, we have
\begin{equation}\label{varia}
\limsup_{\substack{q\to\infty \\ d(q)\leq N}} \frac{M_{2,\varphi}(q)}{q} \leq \frac{C_N}{|\scrD|} \| \varphi \|_2^2.
\end{equation}
\end{lem}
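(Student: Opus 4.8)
The plan is to discard the restriction to $q\scrD$, average over the full group $\ZZ_q^\times$, and expand the square via Ramanujan sums. Since every term is nonnegative, I first note
$$M_{2,\varphi}(q)\le\frac{1}{\phi(q)|\scrD|}\sum_{p\in\ZZ_q^\times}|g_\varphi(p,q)|^2 .$$
Expanding $|g_\varphi(p,q)|^2$ and carrying out the sum over $p$ produces the Ramanujan sum $c_q(m)=\sum_{p\in\ZZ_q^\times}e_q(pm)$, so that
$$\sum_{p\in\ZZ_q^\times}|g_\varphi(p,q)|^2=\sum_{h,h'=0}^{q-1}\varphi\Big(\tfrac hq\Big)\overline{\varphi\Big(\tfrac{h'}q\Big)}\,c_q(h^2-h'^2).$$
Using $|c_q(m)|\le\gcd(m,q)$ (with $\gcd(0,q)=q$, consistent with $c_q(0)=\phi(q)$), the inequality $2|ab|\le|a|^2+|b|^2$, and the symmetry of $\gcd(h^2-h'^2,q)$ in $h,h'$, the problem reduces to bounding
$$\Sigma(q):=\sum_{h,h'=0}^{q-1}\Big|\varphi\Big(\tfrac hq\Big)\Big|^2\gcd(h^2-h'^2,q).$$

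Next I would reorganize $\Sigma(q)$ by divisors. Writing $\gcd(h^2-h'^2,q)=\sum_{e\mid q,\ e\mid h^2-h'^2}\phi(e)$ and performing the inner sum over $h'$, which counts residues with $h'^2\equiv h^2$, yields
$$\Sigma(q)=q\sum_{e\mid q}\frac{\phi(e)}{e}\sum_{h=0}^{q-1}\Big|\varphi\Big(\tfrac hq\Big)\Big|^2\rho_e(h^2),\qquad \rho_e(a)=\#\{x\bmod e:\ x^2\equiv a\ (\mathrm{mod}\ e)\}.$$

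The heart of the matter is the inner sum over $h$, and here lies the main obstacle: $\rho_e(h^2)$ can be as large as $\sqrt e$, precisely when $h$ shares many prime factors with $e$, and a crude estimate at this point would only give the result with $\|\varphi\|_2$ replaced by $\|\varphi\|_\infty$. To retain the $L^2$ norm I would use the elementary bound $\rho_e(h^2)\ll_N\gcd(h,m_e)$, where $m_e=\prod_{p\mid e}p^{\lfloor v_p(e)/2\rfloor}$ satisfies $m_e\le\sqrt e\le\sqrt q$ (this follows prime-by-prime from the Chinese Remainder Theorem, the implied constant absorbing $2^{\omega(e)}\le d(q)\le N$). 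Expanding $\gcd(h,m_e)=\sum_{e'\mid m_e,\ e'\mid h}\phi(e')$ and summing over the progression $h\equiv0\ (\mathrm{mod}\ e')$ turns the inner sum into genuine $M$-point Riemann sums for $|\varphi|^2$ with $M=q/e'\ge\sqrt q\to\infty$. Since $|\varphi|^2$ is Riemann integrable, each such sum converges to $\|\varphi\|_2^2$, uniformly over the at most $N$ relevant divisors $e'$, giving
$$\sum_{h=0}^{q-1}\Big|\varphi\Big(\tfrac hq\Big)\Big|^2\rho_e(h^2)\ll_N q\big(\|\varphi\|_2^2+o(1)\big),\qquad q\to\infty.$$

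Finally I would assemble the pieces using $d(q)\le N$: the sum over $e\mid q$ has at most $N$ terms and $\phi(e)/e\le1$, so $\Sigma(q)\ll_N q^2(\|\varphi\|_2^2+o(1))$. Combining this with the reductions above and the bound $q/\phi(q)=\prod_{p\mid q}(1-1/p)^{-1}\le 2^{\omega(q)}\le d(q)\le N$ gives
$$\frac{M_{2,\varphi}(q)}{q}\le\frac{\Sigma(q)}{q\,\phi(q)|\scrD|}\ll_N\frac{q}{\phi(q)}\cdot\frac{\|\varphi\|_2^2+o(1)}{|\scrD|}\ll_N\frac{\|\varphi\|_2^2+o(1)}{|\scrD|},$$
and taking the limsup over $q\to\infty$ with $d(q)\le N$ yields the claim with a constant $C_N$ depending only on $N$. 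I expect the decisive point to be decoupling the large values of $\rho_e(h^2)$ from an arbitrary weight $\varphi$; the device of bounding $\rho_e(h^2)$ by $\gcd(h,m_e)$ with $m_e\le\sqrt q$ is exactly what keeps the sampling grids $q/e'$ growing and thereby preserves the $L^2$ norm.
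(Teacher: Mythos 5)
Your proof is correct, but it follows a genuinely different route from the paper's. The paper drops the coprimality restriction the other way: it extends the average to all $m\in\ZZ_q$, majorizes $|\varphi|$ by a trigonometric polynomial $\psi\in\scrB(\TT)$ with finite Fourier series, decomposes $\sum_{m\in\ZZ_q}|g_\psi(m,q)|^2$ according to $r=\gcd(m,q)$ into complete averages of $|g_{\psi_r}(p,q/r)|^2$ over $\ZZ_{q/r}^\times$ (using the reduction $g_\psi(m,q)=g_{\psi_r}(m/r,q/r)$), and then invokes Corollary \ref{cor2} together with Parseval to evaluate each of the at most $N$ resulting second moments; so the paper's proof is \emph{not} self-contained but leans on the already-established limit theorem for smooth weights. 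You instead stay on $\ZZ_q^\times$, produce Ramanujan sums, bound them by $\gcd(h^2-h'^2,q)$, and then carry out an elementary divisor analysis whose crux is the estimate $\rho_e(h^2)\ll_N\gcd(h,m_e)$ with $m_e\le\sqrt{e}$; this is exactly the right device to keep the sampling grids of length $q/e'\ge\sqrt{q}$ so that Riemann-sum convergence preserves $\|\varphi\|_2^2$ rather than degrading to $\|\varphi\|_\infty$. I checked the individual steps ($|c_q(m)|\le\gcd(m,q)$ via H\"older's formula, the count $(q/e)\rho_e(h^2)$ of $h'$, the local computation of $\rho_{p^a}(h^2)$ including the $p=2$ factor absorbed into the constant, $q/\phi(q)\le 2^{\omega(q)}\le d(q)\le N$) and they all hold. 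What your approach buys is independence from Theorem \ref{thm1} and its machinery (functional equations, Kloosterman/Sali\'e bounds), at the cost of a somewhat longer combinatorial computation; what the paper's approach buys is brevity and a reuse of results it has already proved. Both yield a constant $C_N$ depending only on $N$, as required.
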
 

Lemma \ref{RIlem} is proved in Section \ref{variasec}. Together with Chebyshev's inequality it implies the following extension of Theorem \ref{thm1}.

\begin{thm}\label{thm2}
Fix a subset $\scrD\subset\TT$ with boundary of measure zero, and let $\varphi:\TT\to\CC$ be Riemann integrable. Then the conclusions of Theorem \ref{thm1} and Corollary \ref{cor1} remain valid for any sequence of $q\to\infty$ with a bounded number of divisors. 
\end{thm}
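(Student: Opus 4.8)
\emph{Plan.} I would derive Theorem~\ref{thm2} from Theorem~\ref{thm1} by approximating the Riemann integrable weight $\varphi$ in $L^2$ by smooth weights, and using Lemma~\ref{RIlem} to show that the resulting error is negligible \emph{uniformly} in $q$. Fix $N$, one of the six cases in the table of Theorem~\ref{thm1}, and a sequence $q\to\infty$ of the type to which that case applies with $d(q)\le N$. Since a Riemann integrable $\varphi:\TT\to\CC$ is bounded it lies in $L^2(\TT)$, and trigonometric polynomials are dense there; so for each $\epsilon>0$ I can choose a trigonometric polynomial $\psi=\psi_\epsilon$ with $\|\varphi-\psi_\epsilon\|_2<\epsilon$. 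Any trigonometric polynomial has finitely many nonzero Fourier coefficients and hence lies in $\scrB(\TT)$, so Theorem~\ref{thm1} applies to it.

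\emph{Approximation scheme.} Let $W_q$ denote the pair (or single variable) appearing in the relevant cell of Theorem~\ref{thm1}, formed with $\varphi$, let $W_q^{(\epsilon)}$ be the same object formed with $\psi_\epsilon$, and write $W$ for the corresponding limit. I will invoke the standard approximation (``converging together'') theorem for weak convergence: if (a) $W_q^{(\epsilon)}\xrightarrow{d}W^{(\epsilon)}$ as $q\to\infty$ for each fixed $\epsilon$, (b) $W^{(\epsilon)}\xrightarrow{d}W$ as $\epsilon\to0$, and (c) $\lim_{\epsilon\to0}\limsup_{q\to\infty}\Prob\big(|W_q^{(\epsilon)}-W_q|>\delta\big)=0$ for every $\delta>0$, then $W_q\xrightarrow{d}W$. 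The key simplification is that the first coordinate of $W_q$ (built only from the complete Gauss sum $g_1$) does not involve $\varphi$; hence $W_q^{(\epsilon)}$ and $W_q$ differ only in the coordinate containing $g_\varphi$, and (c) concerns that coordinate alone.

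\emph{Verification.} Hypothesis (a) is exactly Theorem~\ref{thm1} applied to $\psi_\epsilon\in\scrB(\TT)$. For (b), note first that \eqref{FS} defines an $L^2$ function for any $\varphi\in L^2$, since by Parseval $\|G_\varphi\|_2\le\sqrt2\,\|\varphi\|_2$ (and likewise for $G^\pm$), so $W$ is well defined; by linearity the same bound gives $\|G_{\psi_\epsilon}^{(\cdot)}-G_\varphi^{(\cdot)}\|_2\le\sqrt2\,\|\psi_\epsilon-\varphi\|_2\to0$, so $G_{\psi_\epsilon}^{(\cdot)}\to G_\varphi^{(\cdot)}$ in $L^2(\TT)$, hence in probability, on the common space carrying the uniform $x\in\TT$. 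As the first coordinate is a fixed random variable independent of $\epsilon$, the pair converges jointly and (b) holds. For (c), linearity gives $g_\varphi-g_{\psi_\epsilon}=g_{\varphi-\psi_\epsilon}$, and in every case the relevant denominator has modulus $\gg\sqrt q$: indeed $|g_1(p,q)|=\sqrt q$ for odd $q$, $|g_1(p,q)|=\sqrt{2q}$ for $4\mid q$, and $2|g_1(2p,q/2)|=\sqrt{2q}$ for $q\equiv2\bmod4$ (using the classical evaluation of the complete Gauss sum and $\gcd(2p,q/2)=1$). Consequently the difference of the $\varphi$-dependent coordinates is bounded in modulus by $|g_{\varphi-\psi_\epsilon}(p,q)|/\sqrt q$, and Markov's inequality applied to the random $p$ gives
\begin{equation*}
\Prob\!\left(\frac{|g_{\varphi-\psi_\epsilon}(p,q)|}{\sqrt q}>\delta\right)\le\frac{1}{\delta^2\,q}\,\frac{1}{\#(\ZZ_q^\times\cap q\scrD)}\sum_{p\in\ZZ_q^\times\cap q\scrD}|g_{\varphi-\psi_\epsilon}(p,q)|^2=(1+o(1))\,\frac{M_{2,\varphi-\psi_\epsilon}(q)}{\delta^2\,q},
\end{equation*}
where I use $\#(\ZZ_q^\times\cap q\scrD)\sim\phi(q)|\scrD|$. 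Taking $\limsup_{q\to\infty,\,d(q)\le N}$ and applying Lemma~\ref{RIlem} bounds the right-hand side by $C_N\|\varphi-\psi_\epsilon\|_2^2/(|\scrD|\delta^2)\le C_N\epsilon^2/(|\scrD|\delta^2)\to0$ as $\epsilon\to0$, which is (c).

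\emph{Conclusion and obstacle.} This yields the Theorem~\ref{thm1} part of Theorem~\ref{thm2}; the Corollary~\ref{cor1} part then follows by the continuous mapping theorem exactly as in the smooth case, using $|X|=|Z|=\sqrt2$ and $|Y|=1$. The main obstacle is the uniformity in $q$ demanded by (c): the error estimate must not deteriorate as $q\to\infty$, which is precisely what Lemma~\ref{RIlem} supplies and which forces the hypothesis $d(q)\le N$, as the constant $C_N$ grows with $N$. A secondary point needing care is that the approximation error is measured \emph{after} division by $g_1$ (or $2g_1(2p,q/2)$), so one must know this denominator is of exact order $\sqrt q$, not merely $O(\sqrt q)$; the absolute-value evaluation of the complete Gauss sum provides this lower bound and makes the division harmless.
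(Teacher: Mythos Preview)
Your argument is correct and follows essentially the same strategy as the paper: approximate $\varphi$ in $L^2$ by $\psi\in\scrB(\TT)$, invoke Theorem~\ref{thm1} for $\psi$, and control the error via Chebyshev/Markov together with Lemma~\ref{RIlem}. The only difference is packaging: the paper first establishes tightness (its Lemma~\ref{lemA}) and then shows the averages form a Cauchy sequence for $F\in\C_0^\infty(\CC)$, whereas you invoke the standard ``converging together'' lemma directly; your three hypotheses (a)--(c) correspond precisely to the paper's ingredients (Theorem~\ref{thm1} for $\psi$, the $L^2$ continuity of $\varphi\mapsto G_\varphi^{(\cdot)}$, and Lemma~\ref{lemB}), so the two write-ups are interchangeable.
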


The proof of Theorem \ref{thm2} is supplied in Section \ref{extens}.

\begin{remark}\label{rem1}
For $\varphi\in\scrB(\TT)$, the functions in \eqref{FS} are continuous and bounded. Arkhipov and Oskolkov \cite{Arkhipov87,Oskolkov91} prove that boundedness (but not continuity) still holds even if $\varphi$ is the characteristic function of a subinterval of $\TT$. This implies that the limit distribution has compact support  and that therefore Corollary \ref{cor2} remains valid in this case, subject to the addtional assumption $d(q)\leq N$.
\end{remark}

\begin{remark}\label{rem2}
It seems plausible that the hypothesis on the number of divisors of $q$ can be removed from Theorem \ref{thm2}, at least in the case of weights $\varphi$ of bounded variation---or indeed all Riemann integrable functions. 
\end{remark}

To illustrate Theorem \ref{thm2}, we have computed numerically the value distribution of the real and imaginary parts of incomplete Gauss sums for different values of $q$, see Figures \ref{fig5012}--\ref{fig5014} and Section \ref{secNumerics}.

\begin{figure}
\begin{center}
\includegraphics[width=0.49\textwidth]{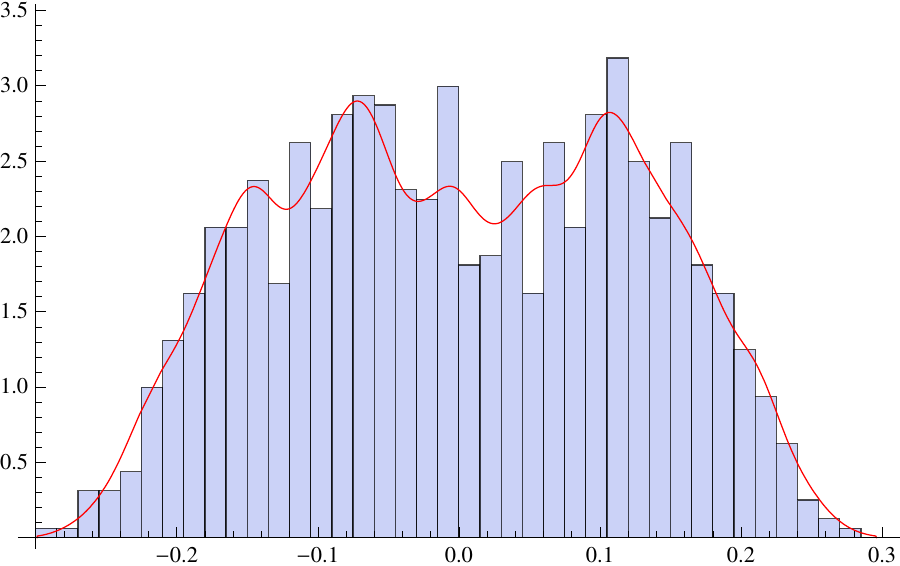}
\includegraphics[width=0.49\textwidth]{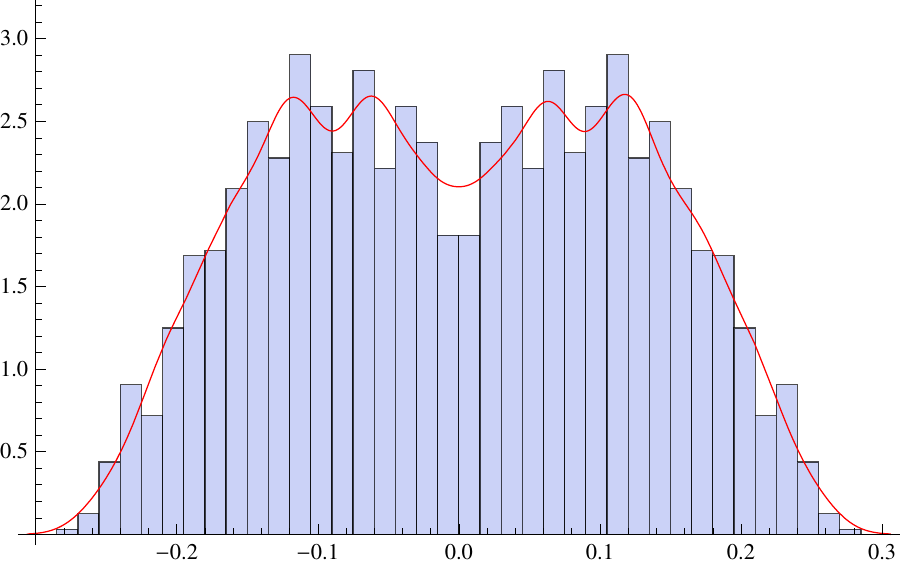}
\end{center}
\caption{The histogram on the left shows the value distribution of the real part of $\frac{g_\varphi(p,q)}{g_1(p,q)}-\frac{1}{\sqrt7}$ where $\varphi$ is the characteristic function of the interval $[0,\frac{1}{\sqrt7}]$, $q=5012=2^2\times 7\times 179$ and $p$ uniformly distributed in $\ZZ_q^\times$. There are thus $\phi(q)=2136$ sample points distributed across 40 bins, which means we have on average 53.4 values in each bin. The histogram seems consistent with fluctuations of the order of the square-root of that number, i.e., approximately $14\%$ of the height of each bin. The histogram on the right shows the imaginary part of the corresponding quantities. The continuous curves represent a numerical approximation to the real and imaginary part of the probability density of the random variable $G_\varphi^+(x)$.} \label{fig5012}
\end{figure}
\begin{figure}
\begin{center}
\includegraphics[width=0.49\textwidth]{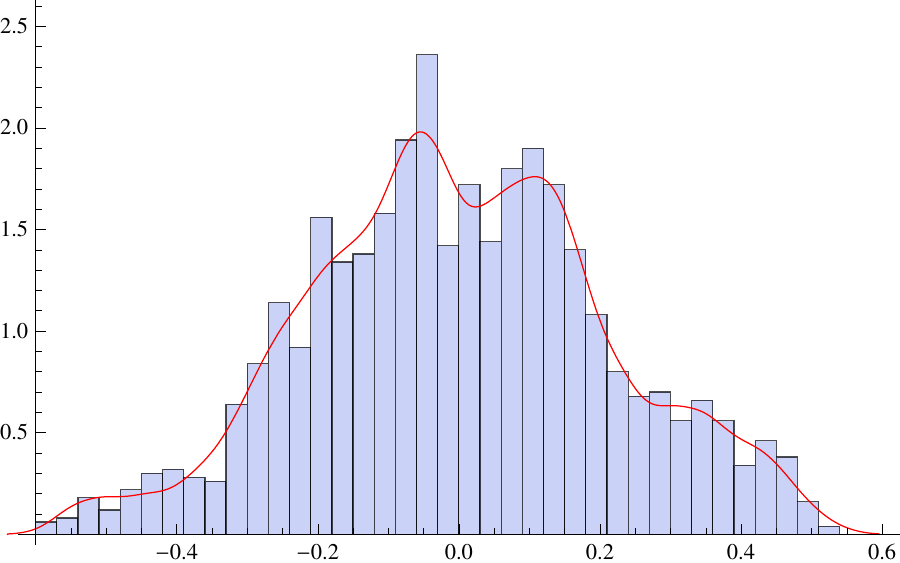}
\includegraphics[width=0.49\textwidth]{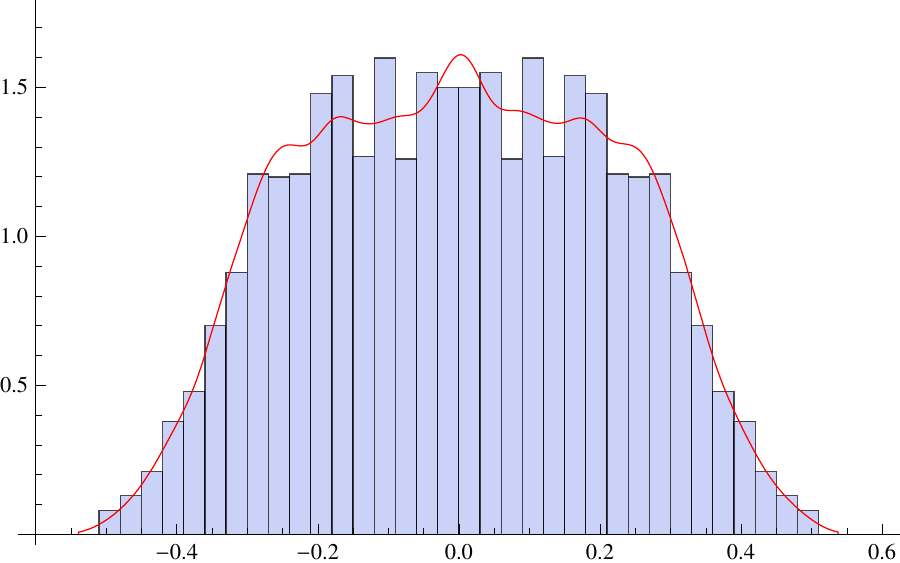}
\end{center}
\caption{Same as Figure \ref{fig5012}, now for $q=5013=3^2\times 557$. The number of sample points is now $\phi(q)=3336$. The continuous curves represent a numerical approximation to the real and imaginary part of the probability density of the random variable $G_\varphi(x)$.} \label{fig5013}
\end{figure}
\begin{figure}
\begin{center}
\includegraphics[width=0.49\textwidth]{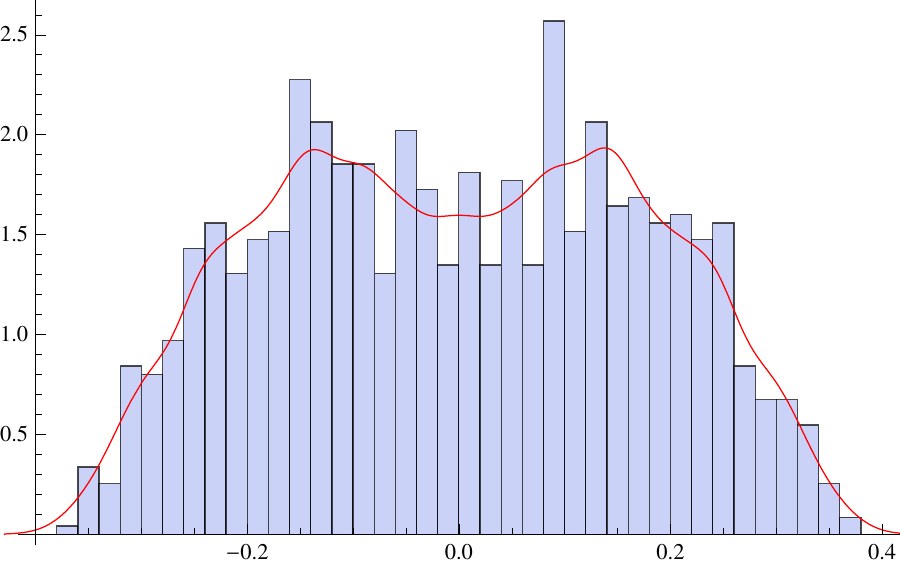}
\includegraphics[width=0.49\textwidth]{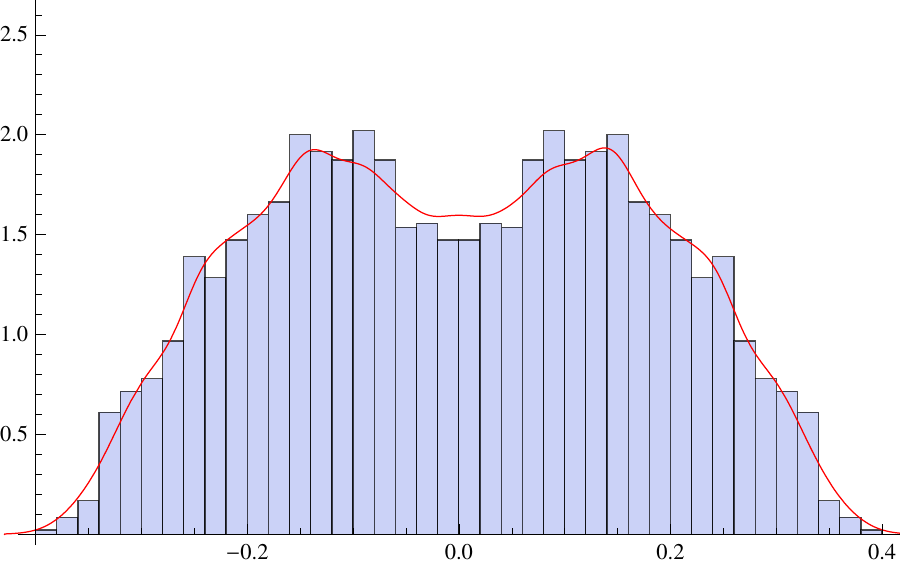}
\end{center}
\caption{The histogram on the left shows the value distribution of the real part of $\frac{g_\varphi(p,q)}{2g_1(2p,q/2)}$ with the same $\varphi$ as in Figure \ref{fig5012} and $q=5014=2\times23\times109$, $p$ uniformly distributed in $\ZZ_q^\times$, where $\phi(q)=2376$. The histogram on the right shows the imaginary part of the corresponding quantities. The continuous curves represent a numerical approximation to the real and imaginary part of the probability density of the random variable $G_\varphi^-(x)$.} \label{fig5014}
\end{figure}

\section{Functional equations for incomplete Gauss sums\label{func}} 

Legendre's quadratic residue symbol is defined for an odd prime $b$ by
\begin{equation} 
\left(\frac{a}{b}\right)=
\begin{cases}               
+1 & \text{if $b\nmid a$ and $a$ is a quadratic residue}\\                   
0 & \text{if $b\mid a$}\\
-1& \text{if $b\nmid a$ and $a$ is a quadratic non residue.}            
\end{cases}  
\end{equation}
Following Jacobi, we extend the definition to arbitrary odd integers $b$ multiplicatively:
Let $b$ a positive odd integer with prime factorization $\prod_{i=1}^s p_i^{r_i}$. For $a\in\ZZ$ we define $(\frac{a}{1})=1$ and $(\frac{a}{b})=\prod_{i=1}^s(\frac{a}{p_i})^{r_i}$. The generalized quadratic residue symbol (or Jacobi symbol) $\left(\frac{a}{b}\right)$ is characterized by the following properties (cf.~\cite{Shimura73}):
\begin{list}{\rm (\roman{fig})}{\usecounter{fig}}
\item $\left(\frac{a}{b}\right)=0$ if $\gcd(a,b)\neq 1$.
\item If $b$ is an odd prime, $\left(\frac{a}{b}\right)$ coincides
with the ordinary quadratic residue symbol.
\item If $b>0$, $\left(\frac{\cdot}{b}\right)$ defines a character
modulo $b$.
\item If $a\neq 0$, $\left(\frac{a}{\cdot}\right)$ defines a character
modulo a divisor of $4a$, whose conductor is the conductor of
$\QQ(\sqrt a)$ over $\QQ$. 
\item $\left(\frac{a}{-1}\right)=\sgn a$.
\item $\left(\frac{0}{\pm 1}\right)= 1$.
\end{list}
In particular $\left(\frac{a}{b}\right)^2$=1, if $\gcd(a,b)=1$.

We assume from now on that $\gcd(p,q)=1.$

The classical Gauss sum
\begin{equation}
g_1(p,q)=\sum_{h\bmod q} e_q(p h^2) .
\end{equation}
can be evaluated explicitly in terms of the Jacobi symbol:
\begin{equation}\label{GS} 
g_1(p,q)=
\begin{cases}
(1+\i)\; \epsilon_p^{-1} (\frac{q}{p})\; \sqrt q
& \text{if $q\equiv 0\bmod 4$} \\ 
\epsilon_q (\frac{p}{q}) \;\sqrt q
& \text{if $q\equiv 1\bmod 2$}\\
0 & \text{if $q\equiv 2\bmod 4$,} 
\end{cases}
\end{equation}
with $\epsilon_a$ as defined before Theorem \ref{thm1}.

The following theorem is implicit in the papers of Fiedler, Jurkat and K\"orner \cite{Fiedler77} and Oskolkov \cite{Oskolkov91}

\begin{thm}\label{FEthm}
For $\varphi\in\scrB(\TT)$,
\begin{equation} 
g_\varphi(p,q)=
\begin{cases}
g_1(p,q) \, G_\varphi^+\big(-\frac{\overline{p}}{q}\big)
& \text{if $q\equiv 0\bmod 4$} \\[5pt]
g_1(p,q) 
 \, G_\varphi\big(-\frac{\overline{4p}}{q}\big)
& \text{if $q\equiv 1\bmod 2$}\\[5pt]
2 g_1(2 p,q/2) 
 \, G_\varphi^-\big(-\frac{\overline{8p}}{q/2}\big)
& \text{if $q\equiv 2\bmod 4$.}
\end{cases}
\end{equation}
(In the first and second case, $\overline x$ denotes the inverse of $x\bmod q$, in the third the inverse $\bmod$ $q/2$.)
\end{thm}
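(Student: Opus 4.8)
The plan is to expand $\varphi$ in its Fourier series and reduce the whole statement to the evaluation of a single quadratic Gauss sum with a linear term. Since $\varphi\in\scrB(\TT)$ forces $\sum_k|\widehat\varphi_k|<\infty$, the series $\varphi(h/q)=\sum_{k\in\ZZ}\widehat\varphi_k\,e(kh/q)$ converges absolutely, so I may substitute it into the definition of $g_\varphi(p,q)$ and interchange the finite sum over $h$ with the absolutely convergent sum over $k$. This gives
\[
g_\varphi(p,q)=\sum_{k\in\ZZ}\widehat\varphi_k\,S(p,k;q),\qquad S(p,k;q):=\sum_{h\bmod q}e_q(ph^2+kh).
\]
The entire theorem then follows once $S(p,k;q)$ is computed in each residue class of $q$ and the resulting $k$-sum is recognized as one of the series $G_\varphi^+$, $G_\varphi$, $G_\varphi^-$.

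For $q$ odd the element $2p$ is invertible $\bmod\,q$, so I would complete the square as $ph^2+kh\equiv p(h+\overline{2p}k)^2-\overline{4p}k^2\pmod q$; since $h\mapsto h+\overline{2p}k$ permutes $\ZZ/q\ZZ$, this yields $S(p,k;q)=e_q(-\overline{4p}k^2)\,g_1(p,q)$, and summing against $\widehat\varphi_k$ produces $g_1(p,q)\,G_\varphi(-\overline{4p}/q)$. For $q\equiv0\bmod4$ (so $p$ is odd), the bijective substitution $h\mapsto h+q/2$ multiplies each summand by $(-1)^k$ and hence shows $S(p,k;q)=(-1)^k S(p,k;q)$, killing all odd $k$; for $k=2j$ the square completes as $ph^2+2jh\equiv p(h+\overline{p}j)^2-\overline{p}j^2\pmod q$, giving $S(p,2j;q)=e_q(-\overline{p}j^2)\,g_1(p,q)$, and the sum over even $k$ reassembles $g_1(p,q)\,G_\varphi^+(-\overline{p}/q)$.

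The case $q\equiv2\bmod4$ is where the main obstacle lies: by \eqref{GS} we have $g_1(p,q)=0$, and the square cannot be completed modulo $q$ because the linear coefficient is forced odd while $2pc$ is always even. Writing $q=2m$ with $m$ odd, I would instead apply the Chinese Remainder Theorem to factor $S(p,k;2m)$ as a product of a sum modulo $2$ and a Gauss sum modulo $m$. The mod-$2$ factor equals $1+(-1)^{p+k}$, which vanishes for even $k$ and equals $2$ for odd $k$, matching the fact that $G_\varphi^-$ runs over odd frequencies. The mod-$m$ factor is a quadratic Gauss sum with linear term over the odd modulus $m$, which completes the square cleanly; the CRT bookkeeping turns the inverse of $2$ into the phase $-\overline{8p}k^2\bmod m$ and leaves a pure Gauss sum $g_1(\overline{2}p,m)$. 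The remaining delicate point is to identify $g_1(\overline{2}p,m)=g_1(2p,m)=g_1(2p,q/2)$, which I would justify through the multiplicativity $g_1(a,m)=\big(\tfrac am\big)g_1(1,m)$ for odd $m$ together with $\big(\tfrac{\overline 2}{m}\big)=\big(\tfrac 2m\big)$. Collecting the factor $2$, the phase, and $g_1(2p,q/2)$, the sum over odd $k$ becomes $2\,g_1(2p,q/2)\,G_\varphi^-(-\overline{8p}/(q/2))$, completing this case.

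Throughout, the convergence of the reassembled series $G_\varphi^{\pm}(x)$ and $G_\varphi(x)$ at the relevant rational points $x$ is guaranteed by $\varphi\in\scrB(\TT)$, so no further justification of interchange is needed beyond the initial absolute convergence. The only genuinely nonroutine step is the CRT analysis in the $q\equiv2\bmod4$ case, both in isolating the vanishing of even frequencies and in reconciling $g_1(\overline2 p,q/2)$ with the stated normalization $g_1(2p,q/2)$ via the quadratic residue symbol.
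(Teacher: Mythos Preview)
Your proposal is correct and follows essentially the same route as the paper: reduce by absolute convergence to the single-frequency case $\varphi(x)=e(kx)$, then evaluate $S(p,k;q)$ by completing the square, with the parity obstruction handled via $h\mapsto h+q/2$ when $q\equiv 0\bmod 4$. The only noteworthy difference is in the case $q\equiv 2\bmod 4$: the paper does not invoke the Chinese Remainder Theorem but instead makes the substitution $q_0=q/2$, $p_0=\tfrac14(2p-q)$, so that $p/q=p_0/q_0+\tfrac12$, which recasts the sum as one over an odd modulus $q_0$ with linear coefficient $(k+q_0)/2$; case (ii) then applies directly when $k$ is odd, and the shift $h\mapsto h+q_0$ kills the even-$k$ terms. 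Your CRT factorisation achieves the same reduction and the same identification $g_1(\overline 2 p,q/2)=g_1(2p,q/2)$ via $\big(\tfrac{\overline 2}{q/2}\big)=\big(\tfrac{2}{q/2}\big)$, so the two arguments are equivalent in substance.
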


\begin{proof}
Since the Fourier series of $\varphi$ is absolutely convergent, we may assume without loss of generality that $\varphi(x)=e(kx)$ with $k\in\ZZ$ fixed. The proof is a then simple exercise in completing the square (cf.\ \cite{Fiedler77}):

(i) $q\equiv 0\bmod 4$: For $k=2n$ even,
\begin{equation}
\sum_{h\bmod q}  e_q(p h^2 + 2n h)
= \sum_{h\bmod q}  e_q\big(p (h+n \overline p)^2-p (n \overline p)^2\big) = g_1(p,q)\;e_q(-\overline{p}\, n^2).
\end{equation}
For $k$ odd,
\begin{equation}
\begin{split}
\sum_{h\bmod q}  e_q(p h^2 + k h)
& = \sum_{h\bmod q}  e_q\bigg(p \bigg(h+\frac{q}{2}\bigg)^2 + k \bigg(h+\frac{q}{2}\bigg)\bigg) \\
& = - \sum_{h\bmod q}  e_q(p h^2 + k h),
\end{split}
\end{equation}
and therefore must be zero.

(ii) $q\equiv 1\bmod 2$: For $q$ odd, we may use the inverse of 2 mod $q$:
\begin{equation}\label{twi}
\sum_{h\bmod q}  e_q(p h^2 + k h)
= \sum_{h\bmod q}  e_q\big(p (h+\overline{2p}\,k)^2-p (\overline{2p}\,k)^2\big) = g_1(p,q)\;e_q(-\overline{4p}\, k^2).
\end{equation}

(iii) $q\equiv 2\bmod 4$: We deduce claim (iii) from the previous case: 
Define $q_0=q/2$ and $p_0=\frac14(2p-q)$. Clearly $q_0=1\bmod 2$, $\gcd(p_0,q_0)=1$ and $\frac{p}{q} = \frac{p_0}{q_0} + \frac12$. Then
\begin{equation}\label{rhs12}
\sum_{h\bmod q}  e_q(p h^2 + k h)
= \sum_{h\bmod 2q_0}  e_{q_0} \bigg(p_0 h^2 + \frac{q_0 h^2}{2} + \frac{kh}{2} \bigg)
= \sum_{h\bmod 2q_0}  e_{q_0} \bigg(p_0 h^2  + \frac{(k+q_0)h}{2} \bigg).
\end{equation}
Hence, for $k$ odd, $k+q_0$ is even and eq.~\eqref{twi} yields that the right hand side of \eqref{rhs12} equals
\begin{equation}
\begin{split}
2 g_1(p_0,q_0)e_{q_0}\bigg(-\overline{4p_0}\,  \bigg(\frac{k+q_0}{2}\bigg)^2\bigg)
& = 2 g_1(p_0,q_0)e_{q_0}\big(-\overline{16 p_0}\,  (k+q_0)^2\big) \\
& = 2 g_1(p_0,q_0)e_{q_0}\big(-\overline{16 p_0}\,  k^2\big) \\
& = 2 g_1(\overline{2} p,q/2)e_{q/2}\big(-\overline{8 p}\,  k^2\big) .
\end{split}
\end{equation}
Furthermore, $g_1(\overline{2} p,q/2) =g_1(2 p,q/2)$, since $(\frac{\overline{2}}{q/2})=(\frac{2}{q/2})$.
If $k$ is even,
\begin{equation}
\begin{split}
\sum_{h\bmod 2q_0}  e_{q_0} \bigg(p_0 h^2  + \frac{(k+q_0)h}{2} \bigg)
& =\sum_{h\bmod 2q_0}  e_{q_0} \bigg(p_0 (h+q_0)^2  + \frac{(k+q_0)(h+q_0)}{2} \bigg) \\
& =-\sum_{h\bmod 2q_0}  e_{q_0} \bigg(p_0 h^2  + \frac{(k+q_0)h}{2} \bigg).
\end{split}
\end{equation}
This term therefore vanishes.
\end{proof}

\section{Equidistribution mod $q$\label{equi}}

The functional equations of incomplete Gauss sums stated in Theorem \ref{FEthm} lead us to consider the joint distribution of $\frac{p}{q}$ and $\frac{\overline p}{q}$ on the torus $\TT^2$. The following statement is the second main ingredient in the proof of Theorem \ref{thm1}.

\begin{thm}\label{eqdthm} 
Let $f\in\C(\TT^2)$. Then the following convergence holds uniformly in $t\in\ZZ_q^\times$ as $q\to\infty$:
\begin{enumerate}
\item[(i)] For any sequence of $q$,
\begin{equation}\frac{1}{\phi(q)}\sum_{p\in\ZZ_q^\times}f\bigg(\frac{p}{q},\frac{t\overline p}{q}\bigg)\rightarrow \int_{\TT^2}f(x)dx .
\end{equation}
\item[(ii)]If $q\equiv 0\bmod 4$ is not a square then, for every $\sigma\in\{\pm 1,\pm\i\}$,
\begin{equation}\frac{1}{\phi(q)}\sum_{\substack{p\in\ZZ_q^\times \\ \epsilon_p (\frac{q}{p})=\sigma}}f\bigg(\frac{p}{q},\frac{t\overline p}{q}\bigg)\rightarrow\frac{1}{4}\int_{\TT^2}f(x)dx .
\end{equation}
\item[(iii)]If $q\equiv 0\bmod 4$ then, for every $\sigma\in\{\pm 1\}$,
\begin{equation}\frac{1}{\phi(q)}\sum_{\substack{p\in\ZZ_q^\times \\ p\equiv \sigma\bmod 4}}f\bigg(\frac{p}{q},\frac{t\overline p}{q}\bigg)\rightarrow\frac{1}{2}\int_{\TT^2}f(x)dx .
\end{equation}
\item[(iv)]If $q\equiv 1\bmod 2$ is not a square then, for every $\sigma\in\{\pm 1\}$,
\begin{equation}\frac{1}{\phi(q)}\sum_{\substack{p\in\ZZ_q^\times \\ (\frac{p}{q})=\sigma}}f\bigg(\frac{p}{q},\frac{t\overline p}{q}\bigg)\rightarrow\frac{1}{2}\int_{\TT^2}f(x)dx .
\end{equation}
\end{enumerate} 
\end{thm}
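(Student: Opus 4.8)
The plan is to establish all four statements through Weyl's criterion, reducing each to square-root cancellation for (twisted) Kloosterman sums. For part (i), since finite linear combinations of characters are dense in $\C(\TT^2)$, it suffices to test $f$ against $f(x,y)=e(mx+ny)$ with $(m,n)\in\ZZ^2\setminus\{(0,0)\}$; the integral on the right then vanishes, and one must show
\begin{equation*}
\frac{1}{\phi(q)}\sum_{p\in\ZZ_q^\times} e_q(mp+nt\overline p)\longrightarrow 0
\end{equation*}
uniformly in $t\in\ZZ_q^\times$. The inner sum is the Kloosterman sum $S(m,nt;q)$, degenerating to a Ramanujan or Gauss sum when $m=0$ or $n=0$. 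As $m,n$ are fixed and $\gcd(t,q)=1$, we have $\gcd(m,nt,q)=O(1)$, so the Weil bound gives $|S(m,nt;q)|\ll_{m,n,\varepsilon}q^{1/2+\varepsilon}$ uniformly in $t$; dividing by $\phi(q)\gg_\varepsilon q^{1-\varepsilon}$ yields the claim.

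For the conditioned sums (ii)--(iv) I would first rewrite the indicator of the arithmetic condition as a short linear combination of Dirichlet characters $\psi(p)$, via orthogonality on the relevant finite cyclic group. In case (iv), $\left(\frac{\cdot}{q}\right)$ is a character mod $q$, nontrivial because $q$ is not a square, so $\mathbf 1\!\left[\left(\frac{p}{q}\right)=\sigma\right]=\tfrac12\big(1+\sigma\left(\frac{p}{q}\right)\big)$. In case (iii) the condition $p\equiv\sigma\bmod 4$ is detected by the nontrivial character $\chi_4$ modulo $4$. In case (ii) the map $p\mapsto\epsilon_p\left(\frac{q}{p}\right)$ takes values in $\{\pm1,\pm\mathrm i\}$, and orthogonality on $\mu_4$ gives
\begin{equation*}
\mathbf 1\!\left[\epsilon_p\Big(\tfrac{q}{p}\Big)=\sigma\right]=\frac14\sum_{j=0}^3\sigma^{-j}\,\epsilon_p^{\,j}\Big(\tfrac{q}{p}\Big)^{\!j}.
\end{equation*}
Since $\left(\frac{q}{p}\right)^2=1$ and $\epsilon_p$ is an explicit linear combination of $1$ and $\chi_4(p)$, each summand is a combination of the real characters $1,\chi_4,\chi_q,\chi_4\chi_q$, where $\chi_q=\left(\frac{q}{\cdot}\right)$ is the character modulo a divisor of $4q$ supplied by property (iv) of the Jacobi symbol; it is nontrivial precisely because $q$ is not a square (so $\QQ(\sqrt q)\neq\QQ$), and $\chi_4\chi_q$ is nontrivial too since $\QQ(\sqrt q)$ is real and hence $\chi_q\neq\chi_4$. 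In each case the trivial-character part contributes exactly the stated fraction ($\tfrac14$, $\tfrac12$, or $\tfrac12$) of $\int_{\TT^2}f$ by part (i).

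It then remains to dispose of the nontrivial-character terms. After the Weyl reduction these become twisted Kloosterman sums $\sum_{p\in\ZZ_q^\times}\psi(p)\,e_q(mp+nt\overline p)$ with $\psi$ a nontrivial real (quadratic) character of modulus dividing $4q$; these are Sali\'e sums and enjoy the same square-root cancellation, $\ll_\varepsilon q^{1/2+\varepsilon}$ uniformly in $t$ and in the characters involved, so they again vanish after division by $\phi(q)$. I expect the main obstacle to lie exactly here: in establishing (or precisely citing) uniform Weil-type bounds for these twisted Kloosterman and Sali\'e sums for composite $q$, together with the verification that the characters produced above are genuinely nontrivial under the stated hypotheses on $q$. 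The Weyl reduction and the character expansions are routine; the arithmetic content resides entirely in the exponential-sum estimates, which is presumably why the remainder of Section \ref{equi} is devoted to them.
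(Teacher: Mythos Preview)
Your proposal is correct and matches the paper's approach: Weyl's criterion, orthogonality on $\mu_4$ (or $\mu_2$) to expand the indicator, and Weil-type bounds for the resulting (twisted) Kloosterman sums. The only packaging differences are that the paper keeps the $j=\pm1$ contribution in case~(ii) as a single theta-twisted sum $S_\theta(m,n,q)=\sum_p\epsilon_p\bigl(\tfrac{q}{p}\bigr)e_q(mp+n\overline p)$ and cites the bound of Chinen (and Duke--Friedlander--Iwaniec) directly rather than decomposing $\epsilon_p$ into $1$ and $\chi_4$, and for $j=2$ it uses $\epsilon_p^2=e\bigl(\tfrac{p-1}{4}\bigr)$ to absorb the $\chi_4$-twist into an ordinary Kloosterman sum with first argument shifted by $q/4$; your nontriviality check for $\chi_q$ and $\chi_4\chi_q$ is exactly what the paper establishes in Lemmas~\ref{lem7} and~\ref{lem6}.
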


\begin{remark}
The statement of Theorem \ref{eqdthm} also holds for the test function 
\begin{equation}
f(x_1,x_2)=\chi_\scrD(x_1) g(x_2),
\end{equation}
where $\chi_\scrD$ is the characteristic function of a subset $\scrD\subset\TT$ with boundary of measure zero and $g\in\C(\TT)$. This follows from a standard approximation argument. 
\end{remark}

\begin{remark}
The proof of Theorem \ref{eqdthm} exploits the classic Weil bounds on (twisted) Kloosterman and Sali\'e sums. These directly yield explicit bounds on the rate of convergence in Theorem \ref{eqdthm} for smooth test functions $f$.  
\end{remark}

The following two lemmas will be helpful in proving Theorem \ref{eqdthm}.

\begin{lem}\label{lem7}Assume $q\in\NN$ is not a square. Then  there exists $r\in \ZZ$ such that $r\equiv1 \bmod 4$ and $(\frac{q}{r})=-1$.
\end{lem}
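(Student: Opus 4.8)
The plan is to read $r\mapsto\left(\frac{q}{r}\right)$ as a Dirichlet character and exploit its nontriviality. By property (iv) of the Jacobi symbol, $\chi_q:=\left(\frac{q}{\cdot}\right)$ is a real Dirichlet character modulo $4q$, whose conductor equals the conductor of $\mathbb{Q}(\sqrt q)/\mathbb{Q}$. Since $q>0$ is not a perfect square, $\mathbb{Q}(\sqrt q)$ is a genuine quadratic extension of $\mathbb{Q}$, so this conductor exceeds $1$ and $\chi_q$ is a nontrivial (quadratic) character. Moreover, property (v) together with multiplicativity in the lower entry gives $\left(\frac{q}{-1}\right)=\sgn q=+1$, so $\chi_q$ is \emph{even}, i.e.\ $\chi_q(-1)=+1$.

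First I would set $N=4q$ and regard both $\chi_q$ and the nontrivial character modulo $4$, namely $\chi_{-1}(r)=\left(\frac{-1}{r}\right)$, as characters on $G=(\mathbb{Z}/N\mathbb{Z})^\times$. For odd $r$ the condition $r\equiv 1\bmod 4$ is exactly $\chi_{-1}(r)=1$, so the target residues form the index-two subgroup $H=\ker\chi_{-1}\subset G$.

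The heart of the argument is a short contradiction. Suppose $\left(\frac{q}{r}\right)\neq -1$ for every $r\equiv 1\bmod 4$. Since $\left(\frac{q}{r}\right)=0$ precisely when $\gcd(q,r)>1$, this forces $\chi_q(r)=+1$ for all $r\in H$, i.e.\ $\chi_q$ is trivial on $H=\ker\chi_{-1}$. A character trivial on $\ker\chi_{-1}$ must lie in the annihilator of $H$ in the dual group, which is generated by $\chi_{-1}$; hence $\chi_q\in\{1,\chi_{-1}\}$. But $\chi_q$ is nontrivial (as $q$ is not a square) and even ($\chi_q(-1)=+1$), whereas $\chi_{-1}$ is odd ($\chi_{-1}(-1)=-1$), so $\chi_q\notin\{1,\chi_{-1}\}$, a contradiction. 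Therefore some $r\equiv 1\bmod 4$ (necessarily coprime to $q$) satisfies $\left(\frac{q}{r}\right)=-1$, and we may take $r$ to be a positive representative of its residue class.

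The only delicate point—and the main conceptual obstacle—is establishing that $\chi_q$ is a nontrivial character of the correct parity, which is exactly what properties (iv) and (v) supply; once these are granted, the duality/pigeonhole step is immediate. As an alternative avoiding conductor language, one could construct $r$ directly via the Chinese Remainder Theorem: writing $q=m\cdot(\text{square})$ with $m>1$ the product of the primes dividing $q$ to an odd power, one has $\left(\frac{q}{r}\right)=\left(\frac{m}{r}\right)$ whenever $\gcd(r,q)=1$, and prescribing $r\equiv 1\bmod 8$, $r\equiv 1$ modulo the remaining primes of $q$, and a suitable residue/nonresidue pattern modulo the primes dividing the odd part of $m$ (using quadratic reciprocity, which is sign-free because $r\equiv1\bmod4$) yields the required $r$, with the prime $2$ handled by the choice of $r\bmod 8$. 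I expect the character-theoretic route to be the cleanest for the write-up.
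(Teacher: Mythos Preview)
Your character-theoretic argument is correct. The paper takes a different, entirely constructive route: it writes $q=2^{t_0}a_1^{t_1}\cdots a_s^{t_s}$, picks an index $j$ with $t_j$ odd (possible since $q$ is not a square), and then uses the Chinese Remainder Theorem to produce an explicit $r\equiv 1\bmod 4$ whose residue is a nonresidue modulo $a_j$ and a residue modulo every other odd prime factor (with $r\equiv 5$ or $1\bmod 8$ according as $j=0$ or not), so that quadratic reciprocity (sign-free because $r\equiv1\bmod4$) gives $\bigl(\frac{q}{r}\bigr)=-1$. This is exactly the ``alternative'' you sketch at the end of your proposal.

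What the two approaches buy: your main argument is short and conceptual---it packages the arithmetic into properties (iv) and (v) and reduces the problem to the trivial duality fact that the annihilator of an index-two subgroup has order two. The paper's construction is more hands-on but yields an explicit $r$, and it only uses quadratic reciprocity and CRT rather than the (somewhat heavier) conductor statement (iv). Either is perfectly acceptable here; just be aware that invoking (iv) hides a nontrivial input, namely that $\bigl(\frac{q}{\cdot}\bigr)$ is nonprincipal precisely when $q$ is not a square.
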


\begin{proof}
Let $q=a_0^{t_0}a_1^{t_1}...a_s^{t_s}$ be the prime factorization of $q$, with $a_0=2$. Since $q$ is not a square, at least one of the $t_i$ is odd. Suppose now this happens for index $j$, i.e., $t_j$ is odd, for some $0\leq j \leq s$. 

Fix an integer $c_0$ such that, if $j=0$ then $c_0\equiv 5\bmod 8$ and otherwise $c_0\equiv 1\bmod 8$.
Then $\big(\frac{2}{c_0}\big)=-1$ if $j=0$ and otherwise $\big(\frac{2}{c_0}\big)=1$. In both cases of course $c_0\equiv 1\bmod 4$.

Furthermore, let $c_1,\ldots,c_s$ be integers so that for $i\neq j$, $c_i$ is a quadratic residue mod $a_i$, and for $i=j$, it is not. Hence $\big(\frac{c_i}{a_i}\big)=-1$ if $i=j$ and $\big(\frac{c_i}{a_i}\big)=1$ otherwise.

By the Chinese Remainder Theorem there is an integer $r\equiv 1\bmod 4$ such that $\big(\frac{2}{r}\big)=\big(\frac{2}{c_0}\big)$ and $\big(\frac{r}{a_i}\big)=\big(\frac{c_i}{a_i}\big)$. Using quadratic reciprocity, we have for $i\neq 0$
\begin{equation}
\bigg(\frac{r}{a_i}\bigg) = (-1)^{\frac{r-1}{2}\frac{a_i-1}{2}} \bigg(\frac{a_i}{r}\bigg) =
\bigg(\frac{a_i}{r}\bigg) ,
\end{equation}
since $r\equiv 1\bmod 4$. Using the multiplicativity of the Jacobi symbol we obtain 
\begin{equation}
\bigg(\frac{q}{r}\bigg) = \bigg(\frac{2}{r}\bigg)^{t_0}\bigg(\frac{a_1}{r}\bigg)^{t_1}\cdots \bigg(\frac{a_s}{r}\bigg)^{t_s} = \bigg(\frac{a_j}{r}\bigg)^{t_j} =-1.
\end{equation}
\end{proof}

\begin{lem}\label{lem6}
\begin{enumerate}
\item[(i)] If $q\equiv 0\bmod 4$ is not a square and $\sigma\in\{\pm 1,\pm\i\}$, then
$$
 \sum_{\substack{p\in\ZZ_q^\times \\ \epsilon_p (\frac{q}{p})=\sigma}}1=\frac{1}{4}\phi(q).
$$
\item[(ii)] If $q\equiv 0\bmod 4$ and $\sigma\in\{\pm 1\}$, then
$$
 \sum_{\substack{p\in\ZZ_q^\times \\ p\equiv \sigma\bmod 4}}1=\frac{1}{2}\phi(q).
$$
\item[(iii)] If $q\equiv 1\bmod 2$ is not a square and $\sigma\in\{\pm 1\}$, then
$$
 \sum_{\substack{p\in\ZZ_q^\times \\ (\frac{p}{q})=\sigma}}1=\frac{1}{2}\phi(q).
$$
\end{enumerate}
\end{lem}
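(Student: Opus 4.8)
The plan is to handle all three parts through character theory on the finite abelian group $\ZZ_q^\times$, relying on the orthogonality relation: if $\psi:\ZZ_q^\times\to\{\pm1\}$ is a non-trivial character, then $\sum_{p\in\ZZ_q^\times}\psi(p)=0$, so exactly $\tfrac12\phi(q)$ of the units satisfy $\psi(p)=1$ and $\tfrac12\phi(q)$ satisfy $\psi(p)=-1$. The whole lemma then reduces to exhibiting the relevant characters and checking that they are non-trivial (and, for part (i), mutually independent). Two quadratic characters will recur: the mod-$4$ symbol $\eta(p):=(-1)^{(p-1)/2}$, which satisfies $\eta(p)=1$ iff $p\equiv1\bmod4$, and the Jacobi symbols $(\frac{p}{q})$ and $(\frac{q}{p})$.

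For part (ii), since $4\mid q$ the reduction $p\mapsto p\bmod4$ is a surjective homomorphism $\ZZ_q^\times\to(\ZZ/4\ZZ)^\times=\{\pm1\}$ (surjectivity being the standard fact that $(\ZZ/q\ZZ)^\times\to(\ZZ/d\ZZ)^\times$ is onto for $d\mid q$). Its kernel $\{p\equiv1\bmod4\}$ has index two, so each of the two cosets contains $\tfrac12\phi(q)$ units. For part (iii), property (iii) of the Jacobi symbol says $\chi_0(p):=(\frac{p}{q})$ is a character of $\ZZ_q^\times$; writing $q=\prod_i a_i^{t_i}$ (all $a_i$ odd, as $q$ is odd) we get $\chi_0(p)=\prod_{t_i\text{ odd}}(\frac{p}{a_i})$. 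Because $q$ is not a square, some $t_j$ is odd, and the Chinese Remainder Theorem produces a $p$ that is a non-residue mod $a_j$ and $\equiv1$ mod every other $a_i$, giving $\chi_0(p)=-1$. Hence $\chi_0$ is non-trivial and orthogonality yields $\tfrac12\phi(q)$ for each sign.

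Part (i) is the substantive case. I introduce $\chi(p):=(\frac{q}{p})$ and first verify that it is a genuine character of $\ZZ_q^\times$, i.e. that $(\frac{q}{p})$ depends only on $p\bmod q$: writing $q=2^am$ with $a\geq2$ and $m$ odd, one has $(\frac{q}{p})=(\frac{2}{p})^a(\frac{m}{p})$, and since $4m\mid q$ while (when $a$ is odd, forcing $a\geq3$) $8\mid q$, both factors are unchanged under $p\mapsto p+q$; multiplicativity of the Jacobi symbol in its lower entry then makes $\chi$ a character. Under the bijection sending $(\eta(p),\chi(p))=(1,1),(1,-1),(-1,1),(-1,-1)$ to $\sigma=1,-1,\i,-\i$ respectively, the four values of $\sigma=\epsilon_p(\frac{q}{p})$ are precisely the four fibres of the homomorphism $\Phi=(\eta,\chi):\ZZ_q^\times\to\{\pm1\}^2$. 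It therefore suffices to prove $\Phi$ is surjective, which is equivalent to $\chi$ being non-trivial on $\ker\eta=\{p\equiv1\bmod4\}$; surjectivity with a group homomorphism forces all four fibres to have equal size $\tfrac14\phi(q)$.

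This last point is where Lemma~\ref{lem7} enters, and it is the main obstacle. The lemma supplies $r\equiv1\bmod4$ with $(\frac{q}{r})=-1$; since this symbol is non-zero we have $\gcd(q,r)=1$, so $r$ reduces to a unit $p_0\in\ZZ_q^\times$, and $4\mid q$ forces $p_0\equiv1\bmod4$, i.e. $p_0\in\ker\eta$. I would then confirm $\chi(p_0)=(\frac{q}{p_0})=(\frac{q}{r})=-1$ by the reciprocity bookkeeping: for $r\equiv1\bmod4$ one has $(\frac{m}{r})=(\frac{r}{m})$, which depends only on $r\bmod m$, and the factor $(\frac{2}{r})$ (relevant only when $a$ is odd, hence $8\mid q$) depends only on $r\bmod8$; both are preserved on passing to $p_0=r\bmod q$. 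This produces $p_0\in\ker\eta$ with $\chi(p_0)=-1$, so $\chi|_{\ker\eta}$ is non-trivial, $\Phi$ is onto, and each of its four fibres—hence each value of $\sigma$—accounts for exactly $\tfrac14\phi(q)$ units. The only genuinely delicate steps are this reciprocity transfer of $r$ to a residue mod $q$ and the verification that $\chi$ has period exactly $q$ on the units; everything else is orthogonality of characters.
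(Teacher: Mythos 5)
Your proof is correct and follows essentially the same route as the paper's: both rest on character orthogonality in $\ZZ_q^\times$, with Lemma \ref{lem7} supplying the unit $r\equiv 1\bmod 4$, $\big(\tfrac{q}{r}\big)=-1$ that forces the relevant non-triviality in part (i), surjectivity of reduction mod $4$ handling part (ii), and non-triviality of $\big(\tfrac{\cdot}{q}\big)$ for non-square $q$ handling part (iii). The only difference is organizational: the paper expands the indicator of $\epsilon_p\big(\tfrac{q}{p}\big)=\sigma$ over $k\in\ZZ_4$ and kills each term separately (the $k=\pm1$ terms by the substitution $p=r\tilde p$), whereas you repackage the same information as surjectivity of the homomorphism $(\eta,\chi)$ onto $\{\pm1\}^2$ and equality of its fibres, along the way making explicit the periodicity of $\big(\tfrac{q}{\cdot}\big)$ mod $q$ that the paper uses implicitly.
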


\begin{proof}
(i) We have
\begin{equation}
\sum_{\substack{p\in\ZZ_q^\times \\ \epsilon_p (\frac{q}{p})=\sigma}}1
=\frac{1}{4}\sum_{k\in\ZZ_4}   \sigma^{-k}  \sum_{p\in\ZZ_q^\times} \epsilon_p^k \bigg(\frac{q}{p}\bigg)^k.
\end{equation}
For $k=0$, we have of course,
\begin{equation}
\sum_{p\in\ZZ_q^\times}1=\phi(q) .
\end{equation}
In the case $k=1$,
we need to show that 
\begin{equation} \sum_{p\in\ZZ_q^\times} \epsilon_p \bigg(\frac{q}{p}\bigg)=0.
\end{equation}
By Lemma \ref{lem7}, we can find an $r$ such that $(\frac{q}{r})=-1$ and $r\equiv1\bmod 4$. Since  $(\frac{q}{r})=-1$ we have $r\in\ZZ_q^\times$ and thus for every $p\in\ZZ_q^\times$ there is $\tilde p\in\ZZ_q^\times$ such that $r\tilde{p}=p$. Therefore,
\begin{equation}\begin{split}\sum_{p\in\ZZ_q^\times} \epsilon_p \bigg(\frac{q}{p}\bigg)&=\sum_{\tilde p\in\ZZ_q^\times} \epsilon_{r\tilde p}\bigg(\frac{q}{r}\bigg)\bigg(\frac{q}{\tilde p}\bigg)\\
&=-\sum_{\tilde p\in\ZZ_q^\times} \epsilon_{r\tilde p}\bigg(\frac{q}{\tilde p}\bigg).
\end{split}
\end{equation}  
Since $r\equiv1\bmod4$, we have $\epsilon_{r\tilde p}=\epsilon_{\tilde p}$. Hence,
\begin{equation} \sum_{p\in\ZZ_q^\times} \epsilon_p \bigg(\frac{q}{p} \bigg)=-\sum_{\tilde p\in\ZZ_q^\times} \epsilon_{\tilde p} \bigg(\frac{q}{\tilde p} \bigg)=0.
\end{equation}
The case $k=-1$ is the complex conjugate of the case $k=1$.
For $k=2$,
\begin{equation}
\sum_{p\in\ZZ_q^\times}\epsilon_p^{2}\bigg(\frac{q}{p}\bigg)^2=
\sum_{p\in\ZZ_q^\times}\epsilon_p^{2} =0,
\end{equation}
since $\epsilon_p^2=1$ or $-1$ if $p\equiv 1$ or $3\bmod4$, respectively.

(ii) The proof follows from the $k=2$ part of the proof for (i), since the assumption that $q$ is not a square is not relevant in this case.

(iii) We have for $\sigma=\pm 1$,
\begin{equation}
\sum_{\substack{p\in\ZZ_q^\times \\ (\frac{p}{q})=\sigma}}1
=
\frac{1}{2}\sum_{k\in\ZZ_2}   \sigma^{-k}  \sum_{p\in\ZZ_q^\times}  \bigg(\frac{p}{q}\bigg)^k .
\end{equation}
For $k=0$, the sum is obviously equal to $\phi(q)$. The case $k=1$ corresponds to the well known identity
\begin{equation}\sum_{p\in\ZZ_q^\times}\bigg(\frac{p}{q}\bigg)=0 .
\end{equation}
\end{proof}

\begin{proof}[{\bf Proof of Theorem \ref{eqdthm}}]
We start with the most difficult case (ii). To prove this claim, it suffices (in view of Lemma \ref{lem6} (ii) and Weyl's criterion) to show that 
\begin{equation}
\lim_{q\rightarrow\infty}\frac{1}{\phi(q)}\sum_{\substack{p\in\ZZ_q^\times \\ \epsilon_p (\frac{q}{p})=\sigma}} e\bigg(\frac{mp+nt\overline p}{q} \bigg)=0
\end{equation}
for every fixed $(m,n)\in\ZZ^2\setminus\{(0,0)\}$, uniformly in $t\in\ZZ_q^\times$.
We have
\begin{equation}
\sum_{\substack{p\in\ZZ_q^\times \\ \epsilon_p (\frac{q}{p})=\sigma}} e\bigg(\frac{mp+nt\overline p}{q} \bigg)
=\sum_{k\in\ZZ_4}   \sigma^{-k}  \sum_{p\in\ZZ_q^\times} \epsilon_p^k \bigg(\frac{q}{p}\bigg)^k e\bigg(\frac{mp+nt\overline p}{q} \bigg) .
\end{equation}
For $k=0$ the inner sum is the Kloosterman sum
\begin{equation}
K(m,nt,q)=\sum_{p\in\ZZ_q^\times} e\bigg(\frac{mp+nt\overline p}{q} \bigg),
\end{equation}
for which we have the classical Weil bound $|K(m,nt,q)|\leq \gcd(m,nt,q)^{1/2}  q^{1/2} \tau(q)$, see \cite{Esterman}. Since $m$ and $n$ are fixed and $\gcd(t,q)=1$, $\gcd(m,nt,q)^{1/2}=\gcd(m,n,q)^{1/2}$ is bounded above. Furthermore $\tau(q)\ll_\epsilon q^\epsilon$ for any $\epsilon>0$. Since $\phi(q)\gg_\epsilon q^{1-\epsilon}$ for any $\epsilon>0$, we see that $\phi(q)^{-1}K(m,nt,q)$ tends to zero, uniformly in $t\in\ZZ_q^\times$ as $q\to\infty$, as required.

The case $k=1$ ($k=-1$) leads to the twisted Kloosterman sum
\begin{equation}\label{Stheta}
S_\theta(m,nt,q)=\sum_{p\in\ZZ_q^\times} \epsilon_p \bigg(\frac{q}{p}\bigg) e\bigg(\frac{mp+nt\overline p}{q} \bigg)
\end{equation}
(and the complex conjugate of $S_\theta(-m,-nt,q)$). Here we have the same bound as for Kloosterman sums, $|S_\theta(m,nt,q)|\leq \gcd(m,n,q)^{1/2}  q^{1/2} \tau(q)$, see \cite{Chinen} (but also the more recent \cite{Duke}). We conclude that the contribution of the $k=\pm 1$ term also tends to zero uniformly in $t$.

The case $k=2$ leads to 
\begin{equation}
\sum_{p\in\ZZ_q^\times} \epsilon_p^2 e\bigg(\frac{mp+nt\overline p}{q} \bigg)
=\sum_{p\in\ZZ_q^\times}  e\bigg(\frac{p-1}{4}+\frac{mp+nt\overline p}{q} \bigg) 
= -\i K\bigg(m+\frac{q}{4},nt,q\bigg)
\end{equation}
and is thus reduced to Kloosterman sums.
This proves the case (ii). 

Case (iii) reduces to the same estimates as in case (ii) $k=2$.

Case (iv) is analogous, but here the estimates reduce to bounds on Sali\'e sums
\begin{equation}\label{Salie}
S(m,nt,q)=\sum_{p\in\ZZ_q^\times} \bigg(\frac{p}{q}\bigg) e\bigg(\frac{mp+nt\overline p}{q} \bigg),
\end{equation}
which are the same as the above for the (twisted) Kloosterman sums.

Case (i) of course follows from the classical Weil bound for Kloosterman sums.
\end{proof}

\section{Proof of Theorem \ref{thm1}\label{proof}}

{\bf Case 1a:} $q\equiv 0 \bmod 4$, $q$ not a square. We need to show that for any bounded continuous $F:\CC\to\CC$ we have
\begin{equation}\label{say}
\frac{1}{\phi(q)}\sum_{\substack{p\in\ZZ_q^\times \\ \epsilon_p (\frac{q}{p})=\sigma}}
\chi_\scrD\bigg(\frac{p}{q}\bigg) F\bigg( \frac{g_\varphi(p,q)}{g_1(p,q)} \bigg)\rightarrow\frac{|\scrD|}{4}\int_{\TT}F(G_\varphi^+(x))dx .
\end{equation}
In view of Theorem \ref{FEthm}, this is equivalent to 
\begin{equation}
\frac{1}{\phi(q)}\sum_{\substack{p\in\ZZ_q^\times \\ \epsilon_p (\frac{q}{p})=\sigma}}
\chi_\scrD\bigg(\frac{p}{q}\bigg) F\bigg( G_\varphi^+\bigg(-\frac{\overline p}{q}\bigg) \bigg)\rightarrow\frac{|\scrD|}{4}\int_{\TT}F(G_\varphi^+(x))dx .
\end{equation}
Since $G_\varphi^+$ and $F$ are continuous, the latter statement follows from Theorem \ref{eqdthm} (ii) and subsequent remark, if we choose the test function
\begin{equation}
f(x_1,x_2) = \chi_\scrD(x_1) F(G_\varphi^+(-x_2)) .
\end{equation}

{\bf Case 1b:} $q\equiv 0 \bmod 4$, $q$ is a square. 
We proceed as in Case 1b, and note that the condition $\epsilon_p=1$ ($\epsilon_p=\i$) is equivalent to $p\equiv 1\bmod 4$ ($p\equiv -1\bmod 4$). The statement follows from Theorem \ref{eqdthm} (iii).

{\bf Case 2a:} $q\equiv 1 \bmod 2$, $q$ not a square. In this case, the statement to be proved reduces (again using Theorem \ref{FEthm}) to 
\begin{equation}
\frac{1}{\phi(q)}\sum_{\substack{p\in\ZZ_q^\times \\ (\frac{p}{q})=\sigma}}
\chi_\scrD\bigg(\frac{p}{q}\bigg) F\bigg( G_\varphi\bigg(-\frac{\overline{4p}}{q}\bigg) \bigg)\rightarrow\frac{|\scrD|}{2}\int_{\TT}F(G_\varphi(x))dx .
\end{equation}
which follows from Theorem \ref{eqdthm} (iv) with $t=\overline 4$. 

{\bf Case 2b:} $q\equiv 1 \bmod 2$, $q$ is a square. Analogous to Case 2a, except that we employ Theorem \ref{eqdthm} (i).

{\bf Case 3a:} $q\equiv 2 \bmod 4$, $q$ not a square. Following the same strategy as above, we deduce that the claim of the theorem is equivalent to
\begin{equation}\label{prev}
\frac{1}{\phi(q)}\sum_{\substack{p\in\ZZ_q^\times \\ (\frac{2p}{q/2})=\sigma}}
\chi_\scrD\bigg(\frac{p}{q}\bigg) F\bigg( G_\varphi^-\bigg(-\frac{\overline{8p}}{q/2}\bigg) \bigg)\rightarrow\frac{|\scrD|}{2}\int_{\TT}F(G_\varphi^-(x))dx .
\end{equation}
As in the proof of Theorem \ref{FEthm} (iii), we substitute $q=2q_0$ and $p=2p_0+q_0$, i.e., $q_0=q/2$ and $p_0=\frac14(2p-q)$. Note that this map describes a bijection $\ZZ_q^\times\to \ZZ_{q_0}^\times$. Hence \eqref{prev} is equivalent to
\begin{equation}
\frac{1}{\phi(q)}\sum_{\substack{p\in\ZZ_{q_0}^\times \\  (\frac{p_0}{q_0})=\sigma}}
\chi_\scrD\bigg(\frac{p_0}{q_0}+\frac12 \bigg) F\bigg( G_\varphi^-\bigg(-\frac{\overline{16 p_0}}{q_0}\bigg) \bigg)\rightarrow\frac{|\scrD|}{2}\int_{\TT}F(G_\varphi^-(x))dx ,
\end{equation}
which is again implied by Theorem \ref{eqdthm} (iv).

{\bf Case 3b:} $q\equiv 2 \bmod 4$, $q$ is a square. Analogous to Case 3a, except that we use Theorem \ref{eqdthm} (i).
\qed

\section{Mean-square estimates\label{variasec}}

The key step in the proof of Theorem \ref{thm2} is the estimate on the mean-square given in Lemma \ref{RIlem}.

\begin{proof}[Proof of Lemma \ref{RIlem}]
We have
\begin{equation}
\begin{split}
M_{2,\varphi}(q) & \leq \frac{1}{|\scrD|\phi(q)} \sum_{m\in\ZZ_q} |g_\varphi(m,q)|^2 \\
& \leq \frac{q}{|\scrD|\phi(q)} \sum_{\substack{h,h'\in\ZZ_q\\ h^2\equiv {h'}^2\bmod q}} \bigg| \varphi\bigg(\frac{h}{q}\bigg) \varphi\bigg(\frac{h'}{q}\bigg) \bigg| .
\end{split}
\end{equation}
Take $\psi\in\scrB(\TT)$ such that $|\varphi(x)|\leq\psi(x)$ for all $x\in\TT$. Then
\begin{equation}\label{shsm}
\begin{split}
M_{2,\varphi}(q) & \leq \frac{q}{|\scrD|\phi(q)} \sum_{\substack{h,h'\in\ZZ_q\\ h^2\equiv {h'}^2\bmod q}} \psi\bigg(\frac{h}{q}\bigg) \psi\bigg(\frac{h'}{q}\bigg) \\
& = \frac{1}{|\scrD|\phi(q)} \sum_{m\in\ZZ_q} |g_\psi(m,q)|^2 \\
& = \frac{1}{|\scrD|\phi(q)} \sum_{r|q} \sum_{p\in\ZZ_{q/r}^\times} |g_{\psi_r}(p,q/r)|^2 
\end{split}
\end{equation}
where (recall \eqref{reduct})
\begin{equation}
\psi_r(x)=\sum_{k=0}^{r-1} \psi\bigg( \frac{x+k}{r} \bigg) .
\end{equation}
The Fourier series of this function is
\begin{equation}
\psi_r(x)=r \sum_{n\in\ZZ} \widehat\psi_{rn} e(nx) ,
\end{equation}
where $\widehat\psi_k$ are the Fourier coefficients of $\psi$.
We have thus shown that
\begin{equation}
M_{2,\varphi}(q) \leq \frac{1}{|\scrD|\phi(q)} \sum_{r|q} \phi\bigg(\frac{q}{r}\bigg) M_{2,\psi_r}^{(\scrD=\TT)}\bigg(\frac{q}{r}\bigg)
\leq \frac{q}{|\scrD|\phi(q)} \sum_{r|q} \frac1r \;M_{2,\psi_r}^{(\scrD=\TT)}\bigg(\frac{q}{r}\bigg) .
\end{equation}
By Corollary \ref{cor2}, for every fixed $r$,
\begin{equation}
\lim_{q\to\infty} \frac{1}{q} M_{2,\psi_r}\bigg(\frac{q}{r}\bigg) \leq 
2r\bigg(|\widehat\psi_0|^2 + \sum_{n=1}^\infty  |\widehat\psi_{rn}+\widehat\psi_{-rn}|^2\bigg)
\leq 
2r\bigg(|\widehat\psi_0|^2 + \sum_{n=1}^\infty  |\widehat\psi_{n}+\widehat\psi_{-n}|^2 \bigg).
\end{equation}
The right hand side is bounded by $4r\| \psi \|_2^2$.
The convergence is uniform in $r$, if we assume $\psi$ has a finite Fourier series. In this case,
we therefore have
\begin{equation}\label{varia22}
\limsup_{\substack{q\to\infty \\ d(q)\leq N}} \frac{M_{2,\varphi}(q)}{q} \leq \frac{C_N}{|\scrD|} \| \psi \|_2^2.
\end{equation}

Since $\varphi$ is Riemann integrable, given any $\epsilon>0$, there exist $\psi$ with finite Fourier series such that (a) $|\varphi|<\psi$ (as required in \eqref{shsm}) and (b) $\| |\varphi|-\psi \|_2 <\epsilon$. This proves that the right hand side of \eqref{varia22} is arbitrarily close to $\frac{C_N}{|\scrD|}\,\| \varphi\|_2^2$. 
\end{proof}

\section{Proof of Theorem \ref{thm2}\label{extens}}

The following lemma says that the sequence probability measures defined by the value distribution of incomplete Gauss sums is tight. By the Helly-Prokhorov theorem, this means that the sequence is relatively compact, i.e., every sequence contains a convergent subsequence. 

\begin{lem}\label{lemA}
Fix $N$. For every $\epsilon>0$ there exists $K_\epsilon>0$ such that
\begin{equation}
\limsup_{\substack{q\to\infty \\ d(q)\leq N}}  \frac{1}{\phi(q)} \big|\{ p\in\ZZ_q^\times : q^{-1/2} |g_\varphi(p,q)|> K_\epsilon \}\big| < \epsilon \|\varphi\|_2^2
\end{equation}
for any Riemann integrable $\varphi:\TT\to\CC$. 
\end{lem}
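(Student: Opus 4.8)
The plan is to read Lemma \ref{lemA} off from the mean-square estimate of Lemma \ref{RIlem} by a single application of Chebyshev's inequality, with no further analytic input. The decisive observation is that the cardinality in Lemma \ref{lemA} runs over all of $\ZZ_q^\times$, which is precisely the case $\scrD=\TT$ in the definition of $M_{2,\varphi}(q)$: then $|\scrD|=1$ and $\ZZ_q^\times\cap q\scrD=\ZZ_q^\times$, so Lemma \ref{RIlem} specializes to
\begin{equation}
\limsup_{\substack{q\to\infty \\ d(q)\le N}} \frac{1}{q\,\phi(q)}\sum_{p\in\ZZ_q^\times}|g_\varphi(p,q)|^2 \le C_N\,\|\varphi\|_2^2 .
\end{equation}

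Next I would set $a_p=q^{-1/2}|g_\varphi(p,q)|\ge 0$ and note that $a_p>K$ is equivalent to $a_p^2>K^2$. Markov's inequality applied to the nonnegative random variable $a_p^2$ on the uniform probability space $\ZZ_q^\times$ gives, for every $K>0$,
\begin{equation}
\frac{1}{\phi(q)}\big|\{p\in\ZZ_q^\times : q^{-1/2}|g_\varphi(p,q)|>K\}\big| \le \frac{1}{K^2}\,\frac{1}{\phi(q)}\sum_{p\in\ZZ_q^\times} a_p^2 = \frac{1}{K^2}\,\frac{1}{q\,\phi(q)}\sum_{p\in\ZZ_q^\times}|g_\varphi(p,q)|^2 .
\end{equation}
Taking the $\limsup$ over $q\to\infty$ with $d(q)\le N$ and inserting the previous display bounds the right-hand side by $K^{-2}C_N\|\varphi\|_2^2$. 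Given $\epsilon>0$ it then suffices to choose any $K_\epsilon>\sqrt{C_N/\epsilon}$, for instance $K_\epsilon=\sqrt{2C_N/\epsilon}$, so that $K_\epsilon^{-2}C_N<\epsilon$ and hence the tail density is at most $K_\epsilon^{-2}C_N\|\varphi\|_2^2<\epsilon\|\varphi\|_2^2$. I would emphasize that $K_\epsilon$ depends only on $N$ and $\epsilon$, not on $\varphi$; this $\varphi$-uniform scaling by $\|\varphi\|_2^2$ is exactly the homogeneity supplied by Lemma \ref{RIlem}, and it is what turns the estimate into a genuine tightness criterion.

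Since all the analytic difficulty has already been absorbed into Lemma \ref{RIlem}, I do not anticipate a real obstacle here; the steps that need attention are purely organizational. One must confirm that specializing $\scrD=\TT$ legitimately converts the weighted average $M_{2,\varphi}(q)$ into the plain average over $\ZZ_q^\times$ and cancels the factor $|\scrD|^{-1}$, and one must carry the normalization $q^{-1/2}$ correctly through the squaring when passing to Markov's inequality. The only residual point is the degenerate case $\|\varphi\|_2=0$, which is irrelevant for the tightness application in Section \ref{extens}, where $\varphi$ is fixed and nonzero; in any event the same argument shows that the tail mass tends to $0$ for every fixed $K_\epsilon$, so the substantive content is the uniform smallness of the tail for all $\varphi$ with $\|\varphi\|_2>0$.
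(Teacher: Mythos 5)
Your proposal is correct and follows essentially the same route as the paper: the authors likewise apply Chebyshev's inequality to $q^{-1}|g_\varphi(p,q)|^2$ to bound the tail by $M_{2,\varphi}(q)/(K^2q)$ and then invoke Lemma \ref{RIlem}. Your explicit specialization to $\scrD=\TT$ and the choice $K_\epsilon=\sqrt{2C_N/\epsilon}$ just make precise what the paper leaves implicit.
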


\begin{proof}
We have, by Chebyshev's inequality   
\begin{equation}\label{cheby}
\frac{1}{\phi(q)} \big|\{ p\in\ZZ_q^\times : q^{-1/2} |g_\varphi(p,q)|> K \}\big| < 
\frac{ M_{2,\varphi}(q)}{K^2 q}.
\end{equation}
The claim now follows from Lemma \ref{RIlem}.
\end{proof}

Chebyshev's inequality \eqref{cheby} also implies the following.

\begin{lem}\label{lemB}
Fix $N$. Let $\varphi:\TT\to\CC$ be Riemann integrable. Then, for every $\epsilon>0$, $\delta>0$ there exists $\psi\in\scrB(\TT)$ and such that
\begin{equation}
\limsup_{\substack{q\to\infty \\ d(q)\leq N}}  \frac{1}{\phi(q)} \big|\{ p\in\ZZ_q^\times : q^{-1/2} |g_\varphi(p,q)-g_\psi(p,q)|> \delta \}\big| < \epsilon .
\end{equation}
\end{lem}

\begin{proof}
This follows immediately from \eqref{cheby}; note that $g_\varphi(p,q)-g_\psi(p,q)=g_{\varphi-\psi}(p,q)$ and $\varphi-\psi$ is Riemann integrable.
\end{proof}

We now turn to the proof of Theorem \ref{thm2}. We restrict ourselves to Case 1a where $q\equiv 0\bmod 4$; the other cases are analogous. The relative compactness implied by Lemma \ref{lemA} can be stated as follows. Any sequence of $q\to\infty$ with $d(q)\leq N$ contains a subsequence $\{q_j\}$ with the property: there is a probability measure $\nu$ on $\{\pm1\pm\i\}\times\CC$ such that for any $\sigma\in\{\pm1\pm\i\}$ and any bounded continuous function $F:\CC\to\CC$ we have
\begin{equation}\label{lili}
\lim_{j\to\infty}\frac{1}{|\scrD|\phi(q_j)}\sum_{\substack{p\in\ZZ_{q_j}^\times\cap q_j\scrD \\ \epsilon_p (\frac{q_j}{p})=\sigma}} F\bigg( \frac{g_\varphi(p,q_j)}{g_1(p,q_j)} \bigg) = 
\int_{\CC}F(z) \nu(\sigma,dz) .
\end{equation}
The probability measure $\nu$ may depend on the choice of subsequence, $\varphi$ and on $\scrD$. 

Let us now show that for every $F\in\C_0^\infty(\CC)$ (infinitely differentiable and of compact support) the limit 
\begin{equation}\label{IF}
I_\varphi(F):=\lim_{\substack{q\to\infty \\ d(q)\leq N}}\frac{1}{|\scrD|\phi(q)}\sum_{\substack{p\in\ZZ_{q}^\times\cap q\scrD \\ \epsilon_p (\frac{q}{p})=\sigma}} F\bigg( \frac{g_\varphi(p,q)}{g_1(p,q)} \bigg)
\end{equation}
exists. $I_\varphi(F)$ must then be equal to the right hand side of \eqref{lili}, which in fact means that $\nu$ is unique and the full sequence of $q$ converges. 

To prove the existence of $I_\varphi(F)$, note first of all that since $F\in\C_0^\infty(\CC)$ we have $|F(w)-F(z)| \leq C \min\{1,|w-z|\}$ for some constant $C>0$. Therefore, for $\psi$, $\delta$, $\epsilon$ as in Lemma \ref{lemB}, we have
\begin{equation}
\begin{split}\label{ineq000}
& \frac{1}{|\scrD|\phi(q)}\sum_{\substack{p\in\ZZ_{q}^\times\cap q\scrD \\ \epsilon_p (\frac{q}{p})=\sigma}} \bigg| F\bigg( \frac{g_\varphi(p,q)}{g_1(p,q)} \bigg)-
F\bigg( \frac{g_\psi(p,q)}{g_1(p,q)} \bigg) \bigg| \\
& \leq \frac{C}{|\scrD|\phi(q)}\sum_{\substack{p\in\ZZ_{q}^\times\cap q\scrD \\ \epsilon_p (\frac{q}{p})=\sigma}} \min\bigg\{ 1, \bigg| \frac{g_\varphi(p,q)}{g_1(p,q)} -\frac{g_\psi(p,q)}{g_1(p,q)} \bigg|\bigg\} \\
& \leq \frac{C}{|\scrD|\phi(q)}\sum_{p\in\ZZ_{q}^\times} \min\bigg\{ 1, \bigg| \frac{g_\varphi(p,q)}{g_1(p,q)} -\frac{g_\psi(p,q)}{g_1(p,q)} \bigg|\bigg\} \\
& \leq \frac{C}{|\scrD|} ( 2^{1/2} \delta +\epsilon ) .
\end{split}
\end{equation}
Since the limit $I_\psi(F)$ exists by Theorem \ref{thm1}, the sequence 
\begin{equation}
\frac{1}{|\scrD|\phi(q)}\sum_{\substack{p\in\ZZ_{q}^\times\cap q\scrD \\ \epsilon_p (\frac{q}{p})=\sigma}} F\bigg( \frac{g_\psi(p,q)}{g_1(p,q)} \bigg)
\end{equation}
defines a Cauchy sequence. Using this fact, the bound \eqref{ineq000} and the triangle inequality, we see that 
\begin{equation}\label{say2}
\frac{1}{|\scrD|\phi(q)}\sum_{\substack{p\in\ZZ_{q}^\times\cap q\scrD \\ \epsilon_p (\frac{q}{p})=\sigma}} F\bigg( \frac{g_\varphi(p,q)}{g_1(p,q)} \bigg)
\end{equation}
is a Cauchy sequence, too, and hence $I_\varphi(F)$ exists. As mentioned earlier, this means that $I_\varphi(F)$ must then be equal to the right hand side of \eqref{lili}, which in fact means that $\nu$ is unique and the full sequence of $q$ converges for every bounded continuous $F$.

The bound \eqref{ineq000} furthermore implies that $I_\psi(F)\to I_\varphi(F)$, as $\psi\to\varphi$ in $\L^2(\TT)$. This completes the proof of Theorem \ref{thm2}.

\section{Numerics\label{secNumerics}}

The computations used in Figures \ref{fig5012}--\ref{fig5014} were carried out with Mathematica. We encoded the real and imaginary part of the incomplete Gauss sum $\frac{g_\varphi(p,q)}{g_1(p,q)}-\frac{T}{q}$ (where $\varphi$ is the characteristic function of the interval $[0,\frac{T}{q}]\subset[0,1]$) as
\begin{verbatim}
ReGauss[p_, q_, T_] := 
 If[GCD[p, q] == 1, 
  Re[Sum[Exp[2*Pi*I*h^2*p/q], {h, 1, T}]/
   Sum[Exp[2*Pi*I*h^2*p/q], {h, 1, q}]] - T/q, Infinity]
ImGauss[p_, q_, T_] := 
 If[GCD[p, q] == 1, 
  Im[Sum[Exp[2*Pi*I*h^2*p/q], {h, 1, T}]/
   Sum[Exp[2*Pi*I*h^2*p/q], {h, 1, q}]], Infinity]
\end{verbatim}
and formed a table comprising the values for all integers $p<q$. Whenever $\gcd(p,q)\neq 1$ the value $\infty$ is assigned, which is ignored by Mathematica's {\tt Histogram} command. 

The probability density of real/imaginary part of $G_\varphi^+$ and $G_\varphi$ in Figures \ref{fig5012} and \ref{fig5013} was plotted via the {\tt SmoothHistogram} command, where we truncated the Fourier series $G_\varphi^+(x)$ and $G_\varphi(x)$ at $n=4000$ and sampled $x$ at 300,000 random points in $[0,1]$. As the distribution of real and imaginary part of $G_\varphi^-$ are the same, we only computed $\Im G_\varphi^-$ in Figure \ref{fig5014}, truncated at $n=5000$ and with 500,000 sample points.

\end{document}